\theoremstyle{definition}
\newtheorem{theorem}{Theorem}[section]
\newtheorem{lem}[theorem]{Lemma}
\newtheorem{cor}[theorem]{Corollary}
\newtheorem{pro}[theorem]{Proposition}
\newtheorem{rem}[theorem]{Remark}
\newtheorem{ex}[theorem]{Example}
\numberwithin{equation}{section}
\def\Q{{\mathbb Q}}
\def\R{{\mathbb R}}
\def\N{{\mathbb N}}
\def\Z{{\mathbb Z}}
\def\T{{\mathbb T}}
\def\C0{{\mathfrak C}}
\def\dl{\underrightarrow\lim}
\def\il{\underleftarrow\lim}
\begin{document}

\title{Pure discrete spectrum in substitution tiling spaces}
\author{M. Barge}
\address{Department of Mathematics, Montana State University,
Bozeman, MT 59717, USA}
\email{barge@math.montana.edu}
\author{S. \v{S}timac}
\address{Department of Mathematics, University of Zagreb,
Bijeni\v cka 30, 10 000 Zagreb, Croatia}
\email{sonja@math.hr}
\author{R. F.  Williams}
\address{Department of Mathematics, University of Texas,
Austin, TX 78712, USA}
\email{bob@math.utexas.edu}
\subjclass[2000]{Primary: 37B50, 54H20,
Secondary: 11R06, 37B10, 55N05, 55N35, 52C23.}
\keywords{Pure discrete spectrum, Tiling space, Substitution}
\thanks{S\v{S} was supported in part by the MZOS Grant 037-0372791-2802 of the Republic of Croatia.}

\begin{abstract}
We introduce a procedure for establishing pure discrete spectrum for substitution tiling systems of Pisot family type and illustrate with several examples.
\end{abstract}

\maketitle

\section{Introduction}\label{intro}

The study of Aperiodic Order received impetus in 1985 with the discovery of physical quasicrystals.
These materials have enough long range order to produce a pure point X-ray diffraction spectrum
yet they lack the strict periodicity of traditional crystals. Quasicrystals are modeled by self-affine tilings of Euclidean space and these tilings are effectively studied by means of an associated topological dynamical system, the tiling space. It has been established, through a beautiful circle of ideas and in considerable generality, that a pattern of points (atoms) produces pure point X-ray diffraction if and only if the associated tiling space has pure discrete dynamical spectrum (\cite{D},\cite{LMS}).

We introduce in this article a procedure for determining whether (or not) a tiling dynamical system has pure discrete spectrum. The procedure applies to tiling systems that arise from substitutions.
In dimension one, a necessary condition for a substitution tiling system to have pure discrete spectrum is that the expansion factor of the substitution be a Pisot number.
Recent results (\cite{LS}) show that if a substitution tiling system in any dimension has pure discrete spectrum, then the linear expansion associated with the substitution must be generally `Pisot' in nature and our technique is, correspondingly, restricted to so-called Pisot family substitutions (see the next section for definitions).

Our approach has its origins in the balanced pair algorithm for symbolic substitutive systems initiated by Dekking (\cite{Dek}). A geometric version of balanced pairs (overlap coincidences) was
introduced by Solomyak (\cite{S3}) and considered for one-dimensional substitution tiling systems in \cite{IR}, \cite{AI}, \cite{BK}, \cite{SS}, \cite{ST}, and \cite{Fo}, and for higher dimensions in \cite{L}, \cite{LM}, \cite{LMS}, and \cite{FS}. Akiyama and Lee (\cite{AL}) have implemented overlap coincidence in computer language with an algorithm that will decide whether or not a particular substitution tiling system (with the Meyer property)
has pure discrete spectrum.

Proposition 17.4 of \cite{BK} (generalized to the `reducible' case in \cite{BBK}) states that it suffices to check the convergence
of the balanced pair algorithm on a single balanced pair in order to determine pure discrete spectrum for a one-dimensional Pisot substitution tiling system. The main result of this article extends that result to Pisot family tiling spaces in all dimensions. (The proof of Theorem 16.3 of \cite{BK}, from which the one-dimensional result follows, contains a gap that is not easily plugged by the techniques of that paper. We recover the one-dimensional case here as Corollary \ref{(ab,ba)}.)

In the next section we briefly review the relevant definitions and background for substitution tiling spaces. Two subsequent sections establish sufficient and necessary conditions for pure discrete spectrum and a final section provides examples in which pure discrete spectrum is established.

\section{Definitions and background}\label{defs}

The basic ingredients for an $n$-dimensional substitution are a collection $\{\rho_1,\ldots,\rho_l\}$
of prototiles, a linear expansion $\Lambda:\R^n\to\R^n$, and a substitution rule $\Phi$. Each
{\em prototile} $\rho_i$ is an ordered pair $\rho_i=(C_i,i)$ in which $C_i$ is a compact and topologically regular ($C_i=cl(int(C_i))$) subset of $\R^n$. A translate $\rho_i+v:=(C_i+v,i)$ of a prototile by
a $v\in\R^n$ is called a {\em tile}. The {\em support of a tile} $\tau=\rho_i+v$ is the set $spt(\tau):=C_i+v$ and the {\em interior} of $\tau$ is the interior of its support: $\mathring{\tau}:=int(spt(\tau))$.  A collection of tiles with pairwise disjoint interiors is called a {\em patch}, the {\em support of a patch} $P$ is the union of the supports of its constituent tiles, $spt(P):=\cup_{\tau\in P}spt(\tau)$, and the {\em interior of a patch} $P$ is $\mathring{P}:=\cup_{\tau\in P}(\mathring\tau)$ (or sometimes $int(P)$). A {\em tiling} is a patch whose support is all of $\R^n$.

Given a collection $\mathcal{A}=\{\rho_1,\ldots,\rho_l\}$ of prototiles, let $\mathcal{A}^*$ be the collection of all finite patches consisting of collections of translates of the prototiles. A {\em substitution} with linear expansion $\Lambda$ is a map $\Phi:\mathcal{A}\to\mathcal{A}^*$ with the property: $spt(\Phi(\rho_i))=\Lambda spt(\rho_i)$ for each $i$. Such a $\Phi$ extends to a map from tiles to patches by $\Phi(\rho_i+v):=\Phi(\rho_i)+\Lambda v:=\{\tau+\Lambda v:\tau\in\Phi(\rho_i)\}$ and to a map from patches to patches
by $\Phi(P):=\cup_{\tau\in P}\Phi(\tau)$.

A substitution $\Phi$, as above, is {\em primitive} if there is $k\in\N$ so that $\Phi^k(\rho_i)$ contains a translate of $\rho_j$ for each $i,j\in\{1,\ldots,l\}$. That is equivalent to saying that
$M^k$ has strictly positive entries, where $M=(m_{i,j})$ is the $l\times l$ {\em transition matrix} for $\Phi$:
$m_{i,j}$ is the number of translates of $\rho_i$ that occur in $\Phi(\rho_j)$. A patch $P$ is {\em allowed} for $\Phi$ if there is $k\in\N$ and $v\in\R^n$ so that
$P\subset \Phi^k(\rho_i)+v$ for some $i$. The {\em tiling space} associated with a primitive substitution $\Phi$ is the set $\Omega_{\Phi}:=\{T:T$ is a tiling and all finite patches in $T$ are allowed for $\Phi\}$ with a topology in which two tilings are close if a small translate of one agrees with the other in a large neighborhood of the origin. Let us be more precise about the topology. Given a tiling $T$, let $B_0[T]:=\{\tau\in T:0\in spt(\tau)\}$ and, for $r>0$, let $B_r[T]:=\{\tau\in T:spt(\tau)\cap B_r(0)\ne\emptyset\}$. A neighborhood base at $T\in\Omega_{\Phi}$ is then
$\{U_{\epsilon}\}_{\epsilon>0}$ with $U_{\epsilon}:=\{T'\in\Omega_{\Phi}:B_{1/\epsilon}[T']=B_{1/\epsilon}[T]-v \text{ for some }v\in B_{\epsilon}(0)\}$. This is a metrizable topology (see, for example,
 \cite{AP}) and we denote by $d$ any metric that induces this topology.
The map $T\mapsto \Phi(T)$ and the $\R^n$-action $(T,v)\mapsto T-v$ are continuous and these
intertwine by $\Phi(T-v)=\Phi(T)-\Lambda v$.

A tiling $T$ has (translational) {\em finite local complexity} (FLC) if, for each $r\ge 0$, there are only finitely many patches, up to translation, of the form $B_r[T-v]$, $v\in\R^n$. The substitution $\Phi$ is said to have finite local complexity if each of its admissible tilings has finite local complexity. Finally, $\Phi$ is {\em aperiodic} if, for $T\in\Omega_{\Phi}$ and $v\in\R^n$, $T-v=T\implies v=0$. Under the assumption that $\Phi$ is primitive, aperiodic, and has finite local complexity, the following facts are by now well known (\cite{AP},\cite{S1}):
\begin{itemize}
\item $\Omega_{\Phi}$ is a continuum (i.e., a compact and connected metrizable space),
\item $\Phi:\Omega_{\Phi}\to\Omega_{\Phi}$ is a homeomorphism, and
\item The $\R^n$-action on $\Omega_{\Phi}$ is strictly ergodic (i.e., minimal and uniquely ergodic).
\end{itemize}

All substitutions $\Phi$ in this article will be assumed to be primitive, aperiodic, and to have finite local complexity. We will denote by $\mu$ the unique translation invariant Borel probability measure on $\Omega_{\Phi}$. A function $f\in L^2(\Omega_{\Phi},\mu)$ is an {\em eigenfunction}
of the $\R^n$-action with {\em eigenvalue} $b\in\R^n$ if $f(T-v)=e^{2\pi i\langle b,v\rangle}f(T)$ for
all $v\in\R^n$, and almost all $T\in\Omega_{\Phi}$. The $\R^n$-action on $\Omega_{\Phi}$ is said to have {\em pure discrete spectrum} if the linear span of the eigenfunctions is dense in
$L^2(\Omega_{\Phi},\mu)$. Solomyak proves in \cite{S2} that every eigenfunction is almost everywhere equal to a continuous eigenfunction. It is then a consequence of the Halmos - von Neumann theory that the $\R^n$-action on $\Omega_{\Phi}$ has pure discrete spectrum if and only if there is a compact abelian group $X$, an $\R^n$-action on $X$ by translation, and a continuous
semi-conjugacy $g:\Omega_{\Phi}\to X$ of $\R^n$-actions that is a.e. one-to-one (with respect to Haar measure).

There are certain necessary conditions on the linear expansion $\Lambda$ for the substitution $\Phi$ in order for the $\R^n$-action on $\Omega_{\Phi}$ to have pure discrete spectrum (pds). For there to even be a substitution associated with $\Lambda$, it's necessary that the eigenvalues of $\Lambda$ be algebraic integers (\cite{K},\cite{LS1}). For pds, the eigenvalues of $\Lambda$ must constitute a {\em Pisot family}: If $\lambda$ is an eigenvalue of $\Lambda$ and $\lambda'$ is an algebraic conjugate of $\lambda$ with $|\lambda'|\ge 1$, then $\lambda'$ is also an eigenvalue of $\Lambda$ and of the same multiplicity as $\lambda$ (\cite{LS}). Under the additional assumptions that $\Lambda$ is diagonalizable over $\mathbb{C}$ and all eigenvalues of $\Lambda$ are algebraic conjugates - say, of degree $d$ - and of the same multiplicity - say $m$ - Lee and Solomyak prove (\cite{LS}) that the $\R^n$-action on $\Omega_{\Phi}$ has a generous compliment of eigenfunctions. We will say that a substitution with a linear expansion whose eigenvalues are as in the previous sentence is of {\em $(m,d)$-Pisot family type} (or, if $m$ and $d$ are unimportant, just of {\em Pisot family type} - we'll also say that $\Omega=\Omega_{\Phi}$ is a {\em Pisot family tiling space}). It is proved in
\cite{BKe} that if $\Phi$ is of $(m,d)$-Pisot family type, then there is a continuous finite-to-one semi-conjugacy $g:\Omega_{\Phi}\to X$ of the $\R^n$-action on $\Omega_{\Phi}$ with an action of $\R^n$ by translation on a group $X$. Moreover, there is a hyperbolic endomorphism $F:\T^{md}\to\T^{md}$ of the $md$-dimensional torus so that $X$ is the inverse limit $X=\hat{\T}^{md}:=\il F$.
The $\R^n$-action on $\hat{\T}^{md}$ is a Kronecker action by translation along the $n$-dimensional unstable leaves of the shift homeomorphism $\hat{F}:\hat{\T}^{md}\to\hat{\T}^{md}$ and, in addition, $g\circ \Phi=\hat{F}\circ g$. (If the eigenvalues of $\Lambda$ are algebraic units then $F$ is an automorphism and $\hat{\T}^{md}$ is simply the $md$-dimensional torus.) The group
$\hat{\T}^{md}$, with the Kronecker action, is the {\em maximal equicontinuous factor} of the $\R^n$-action on $\Omega_{\Phi}$: If there is a semi-conjugacy $g':\Omega_{\Phi}\to Y$ with an equicontinuous $\R^n$-action on $Y$, then there is a semi-conjugacy $g'':\hat{\T}^{md}\to Y$ with
$g'=g''\circ g$.

Tilings $T,T'\in\Omega$ are {\em regionally proximal}, $T\sim_{rp}T'$, provided there are $T_k,T'_k\in\Omega$ and $v_k\in\R^n$ so that $d(T,T_k)\to0$, $d(T',T'_k)\to0$, and $d(T_k-v_k,T'_k-v_k)\to0$ as $k\to\infty$. It is proved in \cite{BKe} that if $\Omega$ is a Pisot family tiling space, then $T$ and $T'$ in $\Omega$ are regionally proximal if and only if $T$ and $T'$ are {\em strongly regionally proximal}, $T\sim_{srp}T'$: for each $k\in\N$ there are $T_k,T'_k\in\Omega$ and $v_k\in\R^n$ with $B_k[T_k]=B_k[T]$,  $B_k[T'_k]=B_k[T']$, and $B_k[T_k-v_k]=B_k[T'_k-v_k]$. The notion of regional proximality extends in an obvious way to arbitrary group actions and Auslander proves in \cite{Aus} that, in the case of minimal abelian actions on compact metric spaces, regional proximality is the equicontinuous structure relation.

\begin{theorem}(Auslander) If $G$ is an abelian group acting minimally on the compact metric space $X$, and $g:X\to X_{max}$ is the maximal equicontinuous factor, then $g(x)=g(x')$ if and only if $x\sim_{rp}x'$.
\end{theorem}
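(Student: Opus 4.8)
The plan is to realize the regional proximal relation $Q := \{(x,x')\in X\times X : x\sim_{rp}x'\}$ as exactly the fiber relation of $g$, by exhibiting the quotient $X/Q$ as an equicontinuous factor of $X$ and then invoking the universal property of $X_{max}$. First I would record the soft structural properties of $Q$. Reflexivity and symmetry are immediate. Closedness follows from a diagonal argument: if $(x_j,x_j')\to(x,x')$ with each pair regionally proximal, one selects for each $j$ a single triple $(a_j,a_j',v_j)$ with $a_j,a_j'$ within $1/j$ of $x_j,x_j'$ and $d(v_j a_j, v_j a_j')<1/j$, and these assemble into witnessing nets for $(x,x')$. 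Invariance under $G$ holds because the witnessing condition $d(v_k x_k, v_k x_k')\to0$ survives translating $x_k,x_k'$ by a fixed $w$, using commutativity of $G$ so that $w$ commutes past each $v_k$.

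The first genuinely hard point is transitivity of $Q$, which can fail for non-abelian acting groups. I would establish it through the enveloping (Ellis) semigroup $E = E(X)$, the closure in $X^X$ of the translations $x\mapsto vx$, $v\in G$; this is a compact right-topological semigroup whose minimal left ideals and minimal idempotents encode the proximality structure of the flow. Characterizing membership in $Q$ via the action of minimal idempotents of $E$ along the witnessing nets, and using commutativity of $G$ to control how these idempotents interact, one shows that the composition of two regionally proximal pairs is again regionally proximal. This semigroup analysis, together with minimality, is the main obstacle; by contrast the remaining steps are essentially formal.

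Granting that $Q$ is a closed $G$-invariant equivalence relation, the quotient $Y := X/Q$ is a compact metric $G$-space and the quotient map $\pi : X\to Y$ is a factor map. I would then show that the $G$-action on $Y$ is equicontinuous. The mechanism is that any failure of equicontinuity in $Y$ lifts to nets in $X$: if $d_Y$-close pairs are driven $\epsilon$-apart by translations $v_k$, one lifts (choosing nearby lifts) to nets $a_k,a_k'$ in $X$ converging to a common point, sets $b_k := v_k a_k$ and $b_k' := v_k a_k'$, and observes that $w_k := v_k^{-1}$ — here using that $G$ is a group — witnesses $(\lim b_k,\lim b_k')$ as a regionally proximal pair whose image in $Y$ stays $\epsilon$-separated. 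Since regionally proximal pairs lie in $Q$ and hence collapse in $Y$, this is a contradiction, so $Y$ is equicontinuous.

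Finally, since $\pi : X\to Y$ is an equicontinuous factor, the universal property of the maximal equicontinuous factor furnishes a factor map $h : X_{max}\to Y$ with $\pi = h\circ g$. One implication follows immediately: if $g(x)=g(x')$ then $\pi(x)=h(g(x))=h(g(x'))=\pi(x')$, i.e. $x\sim_{rp}x'$. For the converse, factor maps preserve regional proximality — since $g$ is uniformly continuous on the compact space $X$ and equivariant, $d(v_k x_k, v_k x_k')\to0$ forces $d(v_k g(x_k), v_k g(x_k'))\to0$, so $(g(x),g(x'))$ is regionally proximal in $X_{max}$ — and an equicontinuous system carries a $G$-invariant compatible metric $D$, whence $D(g(x),g(x'))=0$ and $g(x)=g(x')$. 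Together these two implications give the stated equivalence.
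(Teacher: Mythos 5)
First, note that the paper does not prove this statement at all: it is quoted as Auslander's theorem and cited to \cite{Aus}, so your attempt can only be measured against the classical argument it invokes. Your overall plan is the standard one --- show that the regionally proximal relation $Q$ is a closed, invariant equivalence relation, show that $X/Q$ is an equicontinuous factor, and then combine the universal property of $X_{max}$ with the (genuinely easy) fact that equicontinuous factors collapse $Q$. The routine parts are handled correctly: closedness, invariance, and the two implications in your final paragraph (in particular the direction $x\sim_{rp}x' \Rightarrow g(x)=g(x')$, via an invariant metric on the equicontinuous factor, is complete). But the two steps that constitute the actual content of the theorem are not proved, and one of them is argued via a step that fails.

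Transitivity: you state that ``semigroup analysis'' of minimal idempotents in $E(X)$, together with commutativity of $G$, ``shows'' that $Q\circ Q\subseteq Q$. That sentence \emph{is} the theorem. For non-abelian minimal flows $Q$ can fail to be transitive, and the abelian case is precisely the Ellis--Keynes/Veech/McMahon-level result that Auslander's book establishes; without the characterization of $Q$ in terms of the enveloping semigroup and the actual computation where commutativity and minimality enter, nothing has been established. Equicontinuity of $X/Q$: here there is a concrete hole. From $d_Y(y_k,y'_k)\to 0$ you cannot ``choose nearby lifts converging to a common point'': compactness only yields subnet limits $a,a'$ of arbitrary lifts, and all you then know is $\pi(a)=\pi(a')$, i.e.\ $(a,a')\in Q$; a $Q$-fiber can have positive diameter, so $d(a_k,a'_k)\to 0$ is unjustified, and consequently $w_k=-v_k$ (the action here is written additively) does not witness regional proximality of $(\lim b_k,\lim b'_k)$. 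Repairing this requires either openness of the quotient map $\pi$ (which quotients by closed invariant equivalence relations need not have) or the lemma that regionally proximal pairs lift through factor maps of minimal flows --- which is again enveloping-semigroup content, not a formality. As it stands, the proposal proves only the easy direction and reduces the hard direction ($g(x)=g(x')\Rightarrow x\sim_{rp}x'$) to two unproven claims.
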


We thus have:

\begin{cor}\label{srp} If $\Omega$ is a Pisot family tiling space with maximal equicontinuous factor
$g:\Omega\to\hat{\mathbb{T}}^{md}$, then $g(T)=g(T')$ if and only if $T\sim_{srp}T'$.
\end{cor}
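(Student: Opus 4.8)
The plan is to chain together the two facts that the excerpt has just assembled, so the argument is essentially a verification that the hypotheses line up. First I would confirm that Auslander's Theorem applies to the present situation. The acting group here is $\R^n$, which is abelian, and under the standing assumptions (primitive, aperiodic, FLC) the excerpt records that the $\R^n$-action on $\Omega$ is strictly ergodic, hence in particular minimal, on the compact metric continuum $\Omega$. Moreover the excerpt identifies $g:\Omega\to\hat{\T}^{md}$ precisely as the maximal equicontinuous factor of this action. Thus Auslander's Theorem, applied with $G=\R^n$ and $X=\Omega$, immediately gives the equivalence $g(T)=g(T')\iff T\sim_{rp}T'$.

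Next I would invoke the result of \cite{BKe} quoted just above the statement: for a Pisot family tiling space $\Omega$, the relations $\sim_{rp}$ and $\sim_{srp}$ coincide, so that $T\sim_{rp}T'\iff T\sim_{srp}T'$. Combining the two equivalences yields $g(T)=g(T')\iff T\sim_{srp}T'$, which is exactly the assertion of the corollary.

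There is essentially no genuine obstacle in this corollary; it is a formal consequence of the two cited results, and the only point requiring a moment's care is the minimality hypothesis needed to invoke Auslander---but this is already supplied by the strict ergodicity of the $\R^n$-action noted in the background. It is worth remarking that the real substance sits in the \cite{BKe} equivalence $\sim_{rp}\,=\,\sim_{srp}$: that is what upgrades the abstract (and somewhat intractable) regional proximality relation produced by Auslander into the concrete combinatorial strong regional proximality condition, phrased in terms of exact agreement of the patches $B_k[\cdot]$. It is this latter, checkable form that the balanced-pair and overlap-coincidence machinery developed in the remainder of the paper actually requires.
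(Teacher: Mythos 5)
Your proof is correct and is exactly the paper's argument: the paper states Auslander's theorem and the \cite{BKe} equivalence $\sim_{rp}\,=\,\sim_{srp}$ immediately before the corollary and derives it with the words ``We thus have,'' i.e., by precisely the chaining you spell out. Your only addition is making explicit that minimality (needed for Auslander) follows from the strict ergodicity recorded in the background section, which is a correct and harmless elaboration.
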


\section{Sufficient conditions for pure discrete spectrum}\label{method}

Given patches $Q,Q'$ (not necessarily allowed for $\Phi$) and  $x\in int(spt(Q)\cap spt(Q'))$, we'll say that $Q$ and $Q'$ are {\em coincident}
at $x$ provided $B_0[Q-x]=B_0[Q'-x]$ and {\em eventually coincident} at $x$ if there is $k\in\N$ so that $B_0[\Phi^k(Q-x)]=B_0[\Phi^k(Q'-x)]$.
We'll say that $Q$ and $Q'$ are {\em densely eventually coincident on overlap} if $Q$ and $Q'$ are eventually coincident at a set of $x$ that is dense in $int(spt(Q)\cap spt(Q'))$, and, if this happens for tilings $Q,Q'$, we'll say that $Q$ and $Q'$ are {\em densely eventually coincident}. If $Q$ is a finite patch and there is a basis $\{v_1,\ldots,v_n\}$ of $\R^n$ so that $\cup_{k_1,\ldots,k_n\in\Z}(Q+\sum k_iv_i)$ is a tiling of $\R^n$, we'll denote this tiling by $\bar{Q}$. A vector $v=\sum_{i=1}^na_iv_i$ is {\em completely rationally independent} of the basis $\{v_1,\ldots,v_n\}$ if the numbers $a_1,\ldots,a_n$ are independent over $\Q$. This is the same thing as saying that the map
$x+L\mapsto x+v+L$ is a minimal homeomorphism of the torus $\R^n/L$,
$L$ being the lattice spanned by $\{v_1,\ldots,v_n\}$ over $\Z$.

\begin{theorem} \label{main theorem} Suppose that $\Phi$ is a substitution of Pisot family type and that $Q$ is a finite patch such that $\bar{Q}=\cup_{k_1,\ldots,k_n\in\Z}(Q+\sum k_iv_i)$ is a tiling of $\R^n$. Suppose also that $v\in\R^n$ is completely rationally independent of $\{v_1,\ldots,v_n\}$ and that $\bar{Q}$ and $\bar{Q}-v$ are densely eventually coincident. Then the $\R^n$-action on $\Omega_{\Phi}$ has pure discrete spectrum.
\end{theorem}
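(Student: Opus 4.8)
The plan is to prove pure discrete spectrum by showing that the maximal equicontinuous factor map $g\colon\Omega_\Phi\to\hat{\mathbb T}^{md}$ is one-to-one $\mu$-almost everywhere, which by the Halmos--von Neumann criterion recalled in Section \ref{defs} is equivalent to pds. By Corollary \ref{srp} the fibers of $g$ are exactly the strong regional proximality classes, so the task becomes: for $\mu$-a.e.\ $T$, the only $T'$ with $T\sim_{srp}T'$ is $T'=T$. Since $g$ is finite-to-one and the $\mathbb R^n$-action is minimal and uniquely ergodic, the fiber cardinality is a.e.\ equal to a constant (the coincidence rank), and pds holds precisely when that constant is $1$. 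Thus it suffices to rule out a positive-measure set of fibers containing two distinct srp tilings.

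First I would record the elementary stability properties of eventual coincidence. If $B_0[\Phi^k(T-x)]=B_0[\Phi^k(T'-x)]$ then, applying $\Phi$ once more and using $\Phi(T-x)=\Phi(T)-\Lambda x$, the same holds with $k$ replaced by $k+1$; hence the set $E(T,T'):=\{x:T,T'\text{ eventually coincident at }x\}$ is open, and being eventually coincident is preserved under translation in the sense $E(T-u,T'-u)=E(T,T')-u$. In particular ``densely eventually coincident'' is translation invariant, and since $E(T,T'')\supseteq E(T,T')\cap E(T',T'')$ is an intersection of two open dense sets when both pairs are densely eventually coincident, the relation is transitive. Applying translation invariance and transitivity to the hypothesis on $\bar Q,\bar Q-v$, I would conclude that $\bar Q$ and $\bar Q-kv$ are densely eventually coincident for every $k\in\mathbb Z$; because $v$ is completely rationally independent, $\{kv+L\}_{k\in\mathbb Z}$ is dense in $\mathbb R^n/L$, so $\bar Q$ is densely eventually coincident with $\bar Q-w$ for a dense set of relative shifts $w$.

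The crux is to transfer this coincidence information from the auxiliary periodic template $\bar Q$ to genuine srp pairs in $\Omega_\Phi$ and to upgrade \emph{topological} density of coincidences to \emph{full} density. Whether $\bar Q$ and $\bar Q-w$ are eventually coincident at $x$ is a local condition on the two patches $B_r[\bar Q-x]$ and $B_r[\bar Q-x-w]$; by primitivity and minimality every such overlap pattern recurs, with the correct frequencies, throughout the tilings of $\Omega_\Phi$, so the dense family of coincidences for $\bar Q$ furnishes coincidences for the overlaps realized by any srp pair $T\sim_{srp}T'$. To pass from dense to full density I would exploit the self-similarity $g\circ\Phi=\hat F\circ g$ together with the Pisot-family hypothesis: iterating $\Phi$ expands any coincidence region, while, because the algebraic conjugates of the expansion eigenvalues that are $<1$ in modulus govern the contracting directions, the residual disagreement locus of an srp pair is confined to those directions and is driven to zero density under substitution. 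Combined with unique ergodicity of the $\mathbb R^n$-action, an open, dense, substitution-spread coincidence set must then carry full measure.

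Finally, full-density coincidence of the generic srp pair $(T,T')$ forces $T=T'$: two tilings that agree, after a bounded number of substitutions, on a set of points of density one must be equal, since disagreement on any single tile would contribute positive density. Hence the generic fiber of $g$ is a singleton, the coincidence rank is $1$, and $g$ is a.e.\ one-to-one, giving pds. I expect the decisive difficulty to be the middle step---simultaneously transferring the periodic-template coincidences to the srp overlaps in $\Omega_\Phi$ and converting topological density into full $\mu$-density---as this is exactly where the Pisot-family property and unique ergodicity must be brought to bear; the reductions in the first and last paragraphs are comparatively formal.
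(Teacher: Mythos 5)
Your opening reduction (pds $\Leftrightarrow$ the maximal equicontinuous factor map $g$ is a.e.\ one-to-one, via Corollary \ref{srp} and constancy of fiber cardinality) matches the paper's framework, and your elementary observations about openness, translation equivariance and transitivity of eventual coincidence are correct. But both of the steps you yourself identify as the substance of the proof contain genuine gaps. First, the transfer step. Your transitivity argument only yields dense eventual coincidence of $\bar{Q}$ with $\bar{Q}-w$ for $w$ in the countable set $\Z v+L$. The overlap configuration produced by an srp pair $T\sim_{srp}T'$ has a relative displacement $w$ that has no reason to lie in this set, and since ``densely eventually coincident'' is not a closed (nor open) condition in $w$, density of good shifts does not propagate to all shifts. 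Recurrence and unique ergodicity cannot perform this transfer either: those statements concern patches occurring inside a \emph{single} tiling of $\Omega_{\Phi}$, whereas an overlap is a configuration formed by \emph{two different} tilings, and nothing forces it to be realized as an overlap of $\bar{Q}$ with $\bar{Q}-kv$. What is needed---and what the paper proves as its key lemma, by a Baire-category argument exploiting the minimality of the rotation by $v$ on $\R^n/L$---is that \emph{one} eventual-coincidence point for an \emph{arbitrary} shift $w$ already forces dense eventual coincidence for that same $w$. Combined with replacing $Q$ by $\Phi^k(Q)$ so that $Q$ contains every allowed patch of a prescribed diameter, this upgrades the hypothesis to: any two allowed patches sharing a tile are densely eventually coincident on overlap; srp pairs supply exactly such shared-tile configurations via $B_k[T_k-v_k]=B_k[T'_k-v_k]$. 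Nothing in your sketch plays this role.

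Second, your endgame rests on a false principle: dense, or even full-density, eventual coincidence does not force $T=T'$. Distinct tilings in the same $\Phi$-stable manifold (asymptotic pairs, which exist in every aperiodic substitution tiling space) are eventually coincident at every point of a dense open set yet are unequal; eventual coincidence at $x$ permits the number $k$ of substitutions to depend on $x$, so there is no ``bounded number of substitutions,'' and disagreement on a tile of $T,T'$ does not persist under $\Phi$, so it contributes no positive-density disagreement set. Consequently your proposed upgrade from topological density to full $\mu$-density is aimed at a conclusion that is false in the form you use it---and it is also unnecessary: the paper never passes to measure density. Instead it uses the structure theorem of \cite{BKe} (if pds fails, every fiber contains at least two tilings sharing \emph{no} tile), chooses such a pair with both tilings $\Phi$-periodic, and then observes that a single eventual-coincidence point forces $\Phi^k(T)$ and $\Phi^k(T')$ to share a tile for some $k$, hence by periodicity $T$ and $T'$ themselves share a tile---a contradiction that is purely topological. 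To repair your proposal you would need to supply the Baire-category spreading lemma for arbitrary shifts and replace the final equality claim by the disjoint-pair/periodicity contradiction.
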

\begin{proof}
 Let $L$ be the lattice $L:=\{\sum k_iv_i:k_i\in\Z\}$ and let $D\subset spt(Q)$ be a fundamental domain for $\R^n/L$. For $x,y\in\R^n$, by $x\oplus y=z$ we will mean that $z\in D$ and $x+y-z\in L$. Note that eventual coincidence at a point is an open condition, is translation equivariant, and only depends on local structure: if $Q_1$ is eventually coincident with $Q'_1$ at $x$, $B_0[Q_2-y]=B_0[Q_1]$, and $B_0[Q'_2-y]=B_0[Q'_1]$, then $Q_2$ is coincident with $Q'_2$ at
$x+y$.
We first prove that if $x,w\in\R^n$ are such that $\bar{Q}$ and $\bar{Q}-w$ are eventually coincident at $x$, then
$\bar{Q}$ and $\bar{Q}-w$ are densely eventually coincident. To see that this is the case, we may suppose that $x\in D$. Now there is a neighborhood $U$ of $x$ in $D$ so that $\bar{Q}$ and $\bar{Q}-w$ are eventually coincident at $y$ for all $y\in U$. With $v$ as hypothesized, there is a dense open subset $U_1$ of $U$ so that $\bar{Q}$ is eventually coincident with $\bar{Q}-v$ at each $y\in U_1$. Let us introduce the notation $y\sim_c y'$ to mean $\bar{Q}-y$ is coincident with $\bar{Q}-y'$ at $0$. Note that $\sim_c$ is an equivalence relation and if $y\sim_c y+z$ then $y\sim_c y\oplus z$. We have
$y\sim_c y\oplus v$ and $y\sim_c y\oplus w$ for all $y\in U_1$. Now, there is a dense open subset $U_2$ of $U_1$ so that $y\oplus w\sim_c y\oplus w\oplus v$ for all $y\in U_2$ (there is a dense open subset $U'_2$ of $U_1\oplus w$ with $y'\sim_cy'+v$ for all $y'\in U'_2$; let $U_2\subset U_1$ satisfy $U_2\oplus w=U'_2$). For $y\in U_2$,
$y\sim_cy\oplus w$, and $(y\oplus v)\sim_c (y\oplus v)\oplus w$. Inductively, we construct dense open subsets $U_k\subset U_{k-1}\subset\cdots\subset U_1$ of $U$ so that if $y\in U_k$, then
$(y\oplus jv)\sim_c(y\oplus jv)\oplus w$ for $j=1,\ldots,k$. Now pick $y\in\cap_kU_k$ (such exists
by the Baire Category Theorem). The assumption of the independence of $v$ from the $v_i$ guarantees
that $\{y\oplus jv:j\in\N\}$ is dense in $D$. Thus $\bar{Q}$ and $\bar{Q}-w$ are densely eventually coincident.

If $\bar{Q}$ and $\bar{Q}-v$ are eventually coincident at $x$, then $\Phi^k(\bar{Q})=\bar {Q}_k$ and $\Phi^k(\bar{Q}-v)=\bar{Q}_k-\Lambda^kv$, where $Q_k:=\Phi^k(Q)$, are eventually coincident at $\Lambda^kx$, for all $k\in\N$. Also, $\Lambda^kv$ is completely rationally independent of $\{\Lambda^kv_1,\ldots,\Lambda^kv_n\}$. Thus, given $R$, we may assume (by replacing $Q$ with $Q_k$ for sufficiently large $k$) that $Q$ contains every
allowed patch of diameter less than $R$. From this, we see that if $S$ and $S'$ are any two allowed finite patches that share a tile, then $S$ and $S'$ are densely eventually coincident on overlap. Indeed, we may assume that $Q$ contains translated copies of both $S$ and $S'$. Let $S-u\subset Q$ and let $w$ be such that $Q-w\supset S'-u$. There is then $x\in spt(Q)$ so that $Q$ and $Q-w$ are coincident at $x$ (since $Q$ and $Q-w$ agree on the overlap of $S-u$ and $S'-u$). From the above, $\bar{Q}$ and $\bar{Q}-w$ are densely eventually coincident; in particular, $S-u$ and $S'-u$, hence $S$ and $S'$, are densely eventually coincident on overlap.

Now, if the $\R^n$-action on $\Omega$ does not have pure discrete spectrum, then for each $T\in\Omega$ there are finitely many, and at least two, $T'\in\Omega$ so that $T$ and $T'$ are regionally proximal and $T$ and $T'$ don't share a tile (see \cite{BKe}). Pick $T\in\Omega$ that is $\Phi$-periodic. There is then $T'\in\Omega$ that is also $\Phi$-periodic, that is strongly regionally proximal with $T$ (Corollary \ref{srp}), but disjoint from $T$. There are then $T_0,T'_0\in\Omega$ and $v_0\in\R^n$ so that
$B_0[T_0]=B_0[T]$, $B_0[T'_0]=B_0[T']$ and $B_0[T_0-v_0]=B_0[T'_0-v_0]$.
Let $S$ and $S'$ be the finite patches in $T_0$ and $T'_0$: $S:=B_0[T_0]\cup B_0[T_1-v_0]+v_0$ and $S':=B_0[T'_0]\cup B_0[T'_0-v_0]+v_0$. Then $S$ and $S'$ are densely eventually coincident on overlap. In particular, there is $x$ at which $T$ and $T'$ are eventually coincident. But $T$ and $T'$ are periodic, so eventual coincidence means that $T$ and $T'$ share a tile, contrary to assumption. Thus, the $\R^n$-action on $\Omega$ must have pure discrete spectrum.
\end{proof}

Suppose now that $\phi:\mathcal{A}\to\mathcal{A}^*$, $\mathcal{A}=\{1,\ldots,k\}$, is a substitution on $k$ letters. The {\em incidence matrix} of $\phi$ is the $k\times k$ matrix $M=M_{\phi}$ whose
$i,j$-th entry is the number of $i$'s that appear in $\phi(j)$. The substitution is {\em primitive} if some power of $M$ has all positive entries; in this case there is a positive left eigenvector $\omega=(\omega_1,\ldots,\omega_k)$ corresponding to the Perron-Frobenius eigenvalue $\lambda$ of $\phi$. We'll say that $\phi$ is a {\em Pisot substitution} if $\phi$ is primitive and $\lambda$ is a Pisot number. Such a substitution on letters determines a geometrical substitution $\Phi$ on
prototiles $\rho_i:=[0,\omega_i]$, $i=1,\ldots,k$, that is of $(1,d)$ - Pisot family type, $d$ being the algebraic degree of $\lambda$.

Given a word $u\in\mathcal{A}^*$, the {\em abelianization} of $u$ is the vector $[u]\in\Z^k$ with $i$-th entry equal to the number of $i$'s that appear in $u$. A pair of words $(u,v)\in\mathcal{A}^*\times\mathcal{A}^*$ is a {\em balanced pair} if $[u]=[v]$. If $(u_1,v_1)$ and $(u_2,v_2)$ are balanced pairs, we will write
$(u_1,v_1)(u_2,v_2)=(u_1u_2,v_1v_2)$ and call $(u_1,v_1)$ and $(u_2,v_2)$ {\em factors} of $(u_1u_2,v_1v_2)$. A balanced pair $(u,v)$ is {\em irreducible} if it has no (non-trivial) factors. Clearly, every balanced pair $(u,v)$ can be written (uniquely) as a product of irreducible balanced pairs: each of these irreducible balanced pairs is called an {\em irreducible factor} of $(u,v)$. A balanced pair of the form $(a,a)$, $a\in\mathcal{A}$, is called a {\em coincidence}. Given a balanced pair $(u,v)$, we will say that {\em the balanced pair algorithm for $(u,v)$ terminates with coincidence} provided: (1) the collection $\{(x,y):
(x,y)$ is an irreducible factor of $(\phi^m(u),\phi^m(v))$ for some $m\in\N\}$ is finite; and (2) if $(x,y)$ is an irreducible factor of $(\phi^m(u),\phi^m(v))$ for some $m\in\N$, then there is $k\in\N$ so that
$(\phi^k(x),\phi^k(y))$ has a coincidence as a factor.

\begin{theorem}\label{uv,vu} Suppose that $\phi:\mathcal{A}\to\mathcal{A}^*$ is a Pisot substitution with left Perron-Frobenius eigenvector $\omega$ and suppose that $u,v\in\mathcal{A}^*$ are words with the properties: $\langle[u],\omega\rangle$ and $\langle [v],\omega\rangle$ are independent over $\Q$; and the balanced pair algorithm for $(uv,vu)$ terminates with coincidence. Then the $\R$-action on
$\Omega_{\Phi}$ has pure discrete spectrum.
\end{theorem}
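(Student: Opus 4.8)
The plan is to reduce Theorem \ref{uv,vu} to Theorem \ref{main theorem} by geometrizing the balanced pair $(uv,vu)$. First I would fix the dictionary between the symbolic substitution $\phi$ and the geometric substitution $\Phi$: a word $w=a_1\cdots a_p$ realizes a patch tiling the interval $[0,\langle[w],\omega\rangle]$ by the prototiles $\rho_{a_i}=[0,\omega_{a_i}]$, concatenation of words corresponds to juxtaposition of patches, and $\Phi^m$ applied to the patch of $w$ realizes $\phi^m(w)$ (this uses $\langle[\phi(w)],\omega\rangle=\lambda\langle[w],\omega\rangle$, from $\omega M=\lambda\omega$). Write $\ell_u=\langle[u],\omega\rangle$, $\ell_v=\langle[v],\omega\rangle$, $L=\ell_u+\ell_v$. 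I would then take $Q$ to be the patch realizing $uv$ on $[0,L]$, so that $\bar{Q}$ (the periodic extension by the lattice $L\Z$) is the periodic tiling $\overline{uv}=\ldots uv\,|\,uvuv\ldots$, and take the translation to be $\ell_u$ (the vector called $v$ in Theorem \ref{main theorem}). Since shifting $\overline{uv}$ so that position $\ell_u$ moves to the origin produces $\overline{vu}$, we get $\bar{Q}-\ell_u=\overline{vu}$. The hypothesis that $\ell_u,\ell_v$ are independent over $\Q$ gives $\ell_u/L\nin\Q$, i.e. $\ell_u$ is completely rationally independent of $\{L\}$; this part is routine.

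With this setup, the substance of the proof is to show that the termination of the balanced pair algorithm for $(uv,vu)$ with coincidence implies that $\bar{Q}$ and $\bar{Q}-\ell_u$ are densely eventually coincident, which is exactly the remaining hypothesis of Theorem \ref{main theorem}. The key observation is that for each $m$ one has $\Phi^m(\bar{Q})=\overline{\phi^m(uv)}$ and $\Phi^m(\bar{Q}-\ell_u)=\overline{\phi^m(vu)}$, so the irreducible factorization of the balanced pair $(\phi^m(uv),\phi^m(vu))$ partitions one period $[0,\lambda^m L]$ into common cut-point intervals, on each of which the two tilings realize the two halves of an irreducible factor. A coincidence $(a,a)$ occurring as a factor of $(\phi^m(uv),\phi^m(vu))$ is precisely a shared tile $a$ of $\Phi^m(\bar{Q})$ and $\Phi^m(\bar{Q}-\ell_u)$; letting $C_m$ be the set of points in the interior of such a shared tile, I would check directly from the definition (using $\Phi^k(T-x)=\Phi^k(T)-\Lambda^k x$) that the set of points at which $\bar{Q}$ and $\bar{Q}-\ell_u$ are eventually coincident is exactly $E=\bigcup_k \lambda^{-k}C_k$.

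It then remains to prove that $E$ is dense in $\R$; since $E$ is $L$-periodic it suffices to prove density in $[0,L)$, and this is the step I expect to be the main obstacle. The argument I would run is: by the finiteness clause of termination there are only finitely many irreducible factor \emph{types}, hence a uniform bound $\Lambda_{\max}$ on the absolute length of any factor appearing at any level $m$; rescaling the level-$m$ partition of $[0,\lambda^m L]$ back to $[0,L]$ by $\lambda^{-m}$ then produces partitions of mesh at most $\lambda^{-m}\Lambda_{\max}\to 0$. Given any open interval $O\subset[0,L)$, for large $m$ some rescaled factor interval $\lambda^{-m}J$ lies entirely inside $O$; its factor type $(s,t)$, by the coincidence clause, satisfies that $(\phi^{k}(s),\phi^{k}(t))$ has a coincidence $(a,a)$ for some $k$, which places a genuine shared tile (a subinterval $J'$ with $\mathrm{int}(J')\subset C_{m+k}$) inside $\lambda^{k}J$. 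Rescaling by $\lambda^{-(m+k)}$ then lands $\lambda^{-(m+k)}\mathrm{int}(J')\subset E$ inside $\lambda^{-m}J\subset O$, so $E\cap O\ne\emptyset$.

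This yields dense eventual coincidence of $\bar{Q}$ and $\bar{Q}-\ell_u$, and since $\Phi$ is of $(1,d)$-Pisot family type, Theorem \ref{main theorem} then gives pure discrete spectrum of the $\R$-action on $\Omega_\Phi$. The delicate points I expect to need care are: the claim that finiteness of factor \emph{types} bounds the absolute factor \emph{lengths} (so that the rescaled mesh genuinely shrinks to zero), the verification that balanced-pair coincidence matches geometric coincidence position-for-position (which relies on factors occupying a common interval in both tilings, as $[s]=[t]$), and the bookkeeping of which level and which rescaling carries the coincidence furnished by termination back into the target interval $O$.
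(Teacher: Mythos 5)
Your proposal is correct and takes essentially the same route as the paper: the paper's proof likewise takes $Q$ to be the patch with underlying word $uv$, notes that $l_u=\langle[u],\omega\rangle$ is completely rationally independent of $\{l\}$ for $l=\langle[u]+[v],\omega\rangle$, asserts that termination of the balanced pair algorithm for $(uv,vu)$ makes $\bar{Q}$ and $\bar{Q}-l_u$ densely eventually coincident, and then invokes Theorem \ref{main theorem}. The only difference is one of detail: the paper leaves the dense-eventual-coincidence step as an implicit consequence of the hypotheses, whereas you correctly fill it in (finitely many irreducible factor types give a uniform length bound, so the rescaled level-$m$ partitions have mesh tending to zero, and the coincidence clause plants a shared tile inside any prescribed open interval).
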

\begin{proof} Let $Q$ be a patch with underlying word $uv$. The length of the support of $Q$ is $l:=\langle [u]+[v],\omega\rangle$ and $\bar{Q}=\cup_{n\in\Z}(Q+nl )$ is a periodic tiling of $\R$.
Let $l_u:=\langle [u],\omega\rangle$. The hypotheses imply
that $l_u$ is completely rationally independent of $\{l\}$ and that $\bar{Q}$ and $\bar{Q}-l_u$ are densely eventually coincident. The result follows from Theorem \ref{main theorem}.
\end{proof}

The following corollary appears in \cite{BK}. The proof given there has a gap that is not easily fixed using the techniques of that paper.

\begin{cor} \label{(ab,ba)} Suppose that $\phi$ is a Pisot substitution whose incidence matrix has characteristic polynomial irreducible over $Q$. If there are letters $a\ne b$ in the alphabet for $\phi$ so that the balanced pair algorithm for $(ab,ba)$ terminates with coincidence, then the $\R$-action on $\Omega_{\Phi}$ has pure discrete spectrum.
\end{cor}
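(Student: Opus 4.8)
The plan is to obtain this as a special case of Theorem \ref{uv,vu}, taking $u=a$ and $v=b$. With that choice the balanced pair $(uv,vu)$ is exactly $(ab,ba)$, whose balanced pair algorithm terminates with coincidence by hypothesis, so the second condition of Theorem \ref{uv,vu} is automatic. It therefore remains only to verify the first condition, that $\langle[a],\omega\rangle$ and $\langle[b],\omega\rangle$ are independent over $\Q$. Since $[a]=e_a$ and $[b]=e_b$ are standard basis vectors, these two numbers are precisely the coordinates $\omega_a$ and $\omega_b$ of the left Perron--Frobenius eigenvector, and $\Q$-independence of two nonzero reals simply means their ratio is irrational.

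The heart of the argument is thus to show that \emph{all} the coordinates $\omega_1,\ldots,\omega_k$ of $\omega$ are linearly independent over $\Q$; the statement for the particular pair $\omega_a,\omega_b$ is then immediate. Here I would invoke the irreducibility of the characteristic polynomial of $M=M_\phi$. That polynomial has degree $k$ and is irreducible over $\Q$, so $\lambda$ is a simple root with $[\Q(\lambda):\Q]=k$, the $\lambda$-eigenspace of $M$ is one-dimensional, and $\omega$ may be taken with coordinates in $\Q(\lambda)$. Let $V:=\mathrm{span}_{\Q}\{\omega_1,\ldots,\omega_k\}\subseteq\Q(\lambda)$.

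The relation $\omega M=\lambda\omega$ reads $\lambda\omega_i=\sum_j\omega_j M_{ji}$, so $\lambda\omega_i\in V$ for every $i$ and hence $\lambda V\subseteq V$. A nonzero $\Q$-subspace of the field $\Q(\lambda)=\Q[\lambda]$ that is stable under multiplication by $\lambda$ is stable under multiplication by every element of $\Q[\lambda]$, that is, it is a nonzero ideal of the field $\Q(\lambda)$, and is therefore all of $\Q(\lambda)$. Thus the $k$ vectors $\omega_1,\ldots,\omega_k$ span a $k$-dimensional $\Q$-vector space, so they form a basis and are in particular linearly independent over $\Q$. Consequently $\omega_a$ and $\omega_b$ are independent over $\Q$, both hypotheses of Theorem \ref{uv,vu} are satisfied, and pure discrete spectrum follows.

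The only step requiring genuine care---and the one I would regard as the crux---is the linear-independence claim for the eigenvector coordinates; once the ideal argument above is in place the corollary is a formal consequence of Theorem \ref{uv,vu}. I would also record the standing hypothesis that distinct letters $a\neq b$ exist, which forces $k\ge 2$ so that there really are two coordinates to compare, and check that the chosen normalization of $\omega$ respects the $\Q(\lambda)$-rationality used above (any scalar from $\Q(\lambda)$, or indeed any positive real scalar, leaves the $\Q$-independence of $\omega_a,\omega_b$ unaffected).
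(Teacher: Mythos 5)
Your proposal is correct and takes essentially the same route as the paper: the paper's proof is precisely the reduction to Theorem \ref{uv,vu} with $u=a$, $v=b$, citing irreducibility of the characteristic polynomial for the $\Q$-independence of $\langle[a],\omega\rangle$ and $\langle[b],\omega\rangle$. Your ideal-theoretic argument (the $\lambda$-stable $\Q$-span of the coordinates of $\omega$ is a nonzero ideal of $\Q(\lambda)$, hence all of it) correctly supplies the justification that the paper asserts in one line without proof.
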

\begin{proof} Irreducibility of the incidence matrix implies that $\langle [a],\omega\rangle$ and $\langle [b],\omega\rangle$ are independent over $\Q$.
\end{proof}

\begin{rem} It may appear that the applicability of the methods presented above depends on the geometry of the tiles: as stated, Theorem \ref{main theorem} requires the existence of a patch that tiles $\R^n$ periodically. But  in fact if $P$ is any patch and $\{v_1,\ldots,v_n\}$ is any basis with
$\sigma:=\{\sum_{i=1}^nt_iv_i:t_i\in[0,1]\}\subset spt(P)$ then one can consider the generalized patch $Q$ consisting of the generalized tiles $(spt(\tau)\cap\sigma,\tau)$, for $\tau\in P$ with $int(spt(\tau))\cap\sigma\ne\emptyset$. It makes perfectly good sense to speak of $\bar{Q}$ and $\bar{Q}-v$ being densely eventually coincident and the proof of Theorem \ref{main theorem} goes through without change.
\end{rem}

\section{Necessary conditions for pure discrete spectrum}\label{converse}

Throughout this section $\Phi$ will be an $n$-dimensional substitution of $(m,d)$-Pisot family type
with tiling space $\Omega=\Omega_{\Phi}$, inflation $\Lambda$, and maximal equicontinuous factor $g:\Omega\to\hat{\mathbb{T}}^{md}$. A vector $v\in\R^n$ is a {\em return vector} for $\Phi$ if
for some (equivalently, every) $T\in\Omega$, $T-v\cap T\ne\emptyset$. Let $\mathcal{R}=\mathcal{R}_{\Phi}:=\{v:v \text{ is a return vector for }\Phi\}$ denote the set of return vectors for $\Phi$.

We address the question: If the $\R^n$-action on $\Omega$ has pure discrete spectrum, for what $v\in\R^n$ and what tilings $S$
must it be the case that $S$ and $S-v$ are densely eventually coincident? It is clear (if $S\in\Omega$) that the collection of such $v$ is countable: if $S$ and $S-v$ are densely eventually coincident, then $\Phi^k(S)$ and $\Phi^k(S-v)=\Phi^k(S)-\Lambda^kv$ share a tile for some $k\in\N$ so that $\Lambda^kv$ is a return vector, of which there are only countably many by FLC. We will see below that for the class of all tilings in $\Omega$, the answer to the above question is precisely $\cup_{k\in\Z}\Lambda^k\mathcal{R}$. Under additional conditions this collection is a group and we will give it a homological interpretation.

For ease of exposition, we assume that the prototiles $\rho_i$ of $\Phi$ are polytopes and that the tiles in tilings in $\Omega$ meet full face to full face in all dimensions (so each $\rho_i$ has finitely many faces in each dimension, and if $\tau,\sigma\in T\in\Omega$ meet in a point $x$ that is in the relative interior of a face of $\tau$ or $\sigma$, then they meet in that entire face). We may assume that the prototiles `force the border' - otherwise, replace the prototiles by the collared prototiles (see \cite{AP}). Let $X$ be the Anderson-Putnam complex for $\Phi$:  $X$ is a cell complex with one n-cell for each (collared) prototile and these n-cells are glued along faces by translation. If $k$-face $f$ of $\rho_i$ is glued to $k$-face $g$ of $\rho_j$, then there is $v\in\R^n$ so that $f=g-v$, and the gluing is defined by $f\ni x\sim x+v\in g$. The rule for deciding what faces are glued is as follows. The $k$-face $f$ of $\rho_i$ is {\em adjacent} to the $k$-face $g$ of $\rho_j$ if there are $T\in\Omega$ and $u,v\in\R^n$ so that $\rho_i+u,\rho_j+v\in T$ and $\rho_i+u\cap\rho_j+v=f+u=g+v$. Let {\em gluable} be the smallest equivalence relation containing the adjacency relation. Two $k$-faces of prototiles are glued in $X$ if they are gluable. The substitution $\Phi$ on the prototiles induces a continuous map $f=f_{\Phi}:X\to X$ with the property that the inverse limit $\il f$ is homeomorphic with $\Omega$ by a homeomorphism that conjugates the shift $\hat{f}$ on $\il f$ with $\Phi$ on $\Omega$ (\cite{AP}).

We say that a patch $Q$ is {\em admissible} if whenever tiles $\tau,\sigma\in Q$ meet along $k$-faces, those $k$-faces are gluable.
Certainly every patch that is allowed for $\Phi$ is also admissible and if $Q$ is an admissible patch, then so is $\Phi(Q)$. For each admissible patch $Q$ there is then a continuous map $p^Q:spt(Q)\to X$
defined by $p^Q(x)=[x-v]$ provided $x\in spt(\rho_i)+v$ and $\rho_i+v\in Q$. Furthermore, if $f:X\to X$ is the continuous map induced by $\Phi$, then $f\circ p^Q=p^{\Phi(Q)}\circ\Lambda$ on $spt(Q)$ for all admissible patches $Q$.

Now given a path $\gamma:[a,b]\to X$ and $x\in\R^n$, there is a unique (continuous) path $\bar{\gamma}:[a,b]\to\R^n$ with $\bar{\gamma}(a)=x$ and $p^Q(\bar{\gamma}(t))=\gamma(t)$ whenever
$\bar{\gamma}(t)\in spt(Q)$ for any admissible patch $Q$. We will call such a $\bar{\gamma}$ a {\em lift} of $\gamma$ to $\R^n$. If $\gamma$ is a loop in $X$ then $\gamma$ is also a singular 1-cycle and we denote its homology class by $[\gamma]\in H:=H_1(X;\Z)$. If $T\in\Omega$ and $\tau, \tau+v\in T$ (thus $v$ is a return vector), there is a corresponding element $[\gamma]\in H$, where
$\gamma(t):=p^T(x+tv)$ for fixed $x\in spt(\tau)$ and $t\in [0,1]$. Note that $[\gamma]$ doesn't depend on the choice of $x\in spt(\tau)$, but it does depend on $\tau$ (not just $v$). We will show that the reverse of this process is better behaved.

Given a path $\gamma:[a,b]\to X$, let $\bar{\gamma}$ be a lift of $\gamma$ to $\R^n$ and let $l(\gamma):=\bar{\gamma}(b)-\bar{\gamma}(a)$. This is well-defined, i.e., independent of lift, since all lifts
of $\gamma$ are translates of one another (as is easily seen by uniqueness). It is also clear that if $\gamma=\gamma_1*\gamma_2$ is a concatenation of two paths, then $l(\gamma)=l(\gamma_1)+l(\gamma_2)$ and if $\gamma^{-1}$ is the reverse of $\gamma$, then $l(\gamma^{-1})=-l(\gamma)$.
By lifting homotopies, one sees that $l$ induces a homomorphism from the fundamental group of $X$ to the additive group
$\R^n$. We will see that $l$ also passes to homology.

\begin{lem} If $\gamma$ is a loop in $X$ and $0=[\gamma]\in H$ then $l(\gamma)=0$.
\end{lem}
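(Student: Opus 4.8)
The plan is to use the one structural fact driving the statement: the target $\R^n$ is abelian, so the homomorphism $l\colon\pi_1(X)\to\R^n$ already produced must annihilate commutators, and hence factor through the abelianization of $\pi_1(X)$, which by the Hurewicz theorem is exactly $H=H_1(X;\Z)$. To set this up cleanly I first note that $X$ is path connected: for any $T\in\Omega$ the map $p^T\colon\R^n\to X$ is onto (by primitivity every prototile occurs in $T$), so $X$ is connected, being the continuous image of the connected space $\R^n$; as $X$ is a CW complex it is therefore path connected. Fix a basepoint $x_0\in X$; the preceding discussion supplies a group homomorphism $l\colon\pi_1(X,x_0)\to\R^n$.

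The main step is then immediate. Since $\R^n$ is abelian, $l$ vanishes on the commutator subgroup $[\pi_1(X,x_0),\pi_1(X,x_0)]$. The Hurewicz theorem identifies the natural map $h\colon\pi_1(X,x_0)\to H$ as abelianization: $h$ is surjective with kernel precisely this commutator subgroup. Hence $l$ descends to a homomorphism $\bar l\colon H\to\R^n$ with $l=\bar l\circ h$ on loops based at $x_0$; in particular every based loop whose homology class vanishes satisfies $l=0$.

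It then remains only to discard the basepoint restriction, which is the single spot that needs care. Given an arbitrary loop $\gamma$ in $X$ with $[\gamma]=0\in H$, I would choose a path $\delta$ in $X$ from $x_0$ to the basepoint of $\gamma$ and form the based loop $\gamma':=\delta*\gamma*\delta^{-1}$. Additivity of $l$ under concatenation together with $l(\delta^{-1})=-l(\delta)$ gives $l(\gamma')=l(\gamma)$, while $\gamma'$ is freely homotopic to $\gamma$ and so $[\gamma']=[\gamma]=0$ in $H$; the previous paragraph then yields $l(\gamma)=l(\gamma')=\bar l(0)=0$, as desired.

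If a more self-contained argument is preferred over invoking Hurewicz, one can instead view $l$ as a singular $1$-cochain $L\in\mathrm{Hom}(C_1(X),\R^n)$, setting $L(\sigma):=l(\sigma)$ on singular $1$-simplices and extending linearly. For a singular $2$-simplex $\tau$ the chain $\partial\tau$ is carried by the three edges of $\tau$, whose concatenation (with the middle edge reversed) is a loop based at the image $\tau(v_0)$ of a vertex $v_0$ that is null-homotopic rel basepoint through the contractible triangle $\tau$; since $l$ kills such loops, $L(\partial\tau)=0$, so $L$ is a cocycle, and $l(\gamma)=L(\partial c)=(\delta L)(c)=0$ for any boundary $\gamma=\partial c$. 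Either route shows there is no deep obstacle here: the entire content is the identification $H_1\cong\pi_1^{\mathrm{ab}}$ combined with the commutativity of $\R^n$, and the only genuine subtlety is the free-versus-based-loop bookkeeping handled above.
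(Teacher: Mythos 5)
Your proof is correct, but it takes a genuinely different route from the paper's. Both arguments rest on the same facts established just before the lemma (unique path lifting, additivity of $l$ under concatenation and reversal, and homotopy invariance, i.e.\ that $l$ induces a homomorphism $\pi_1(X)\to\R^n$), but they diverge from there. The paper works at the level of explicit chains: it treats $n=1$ separately by pairing oppositely oriented edge traversals in the graph $X$, and for $n>1$ it subdivides $X$, homotopes $\gamma$ into the $1$-skeleton, fills the resulting cycle $\Gamma$ with a simplicial $2$-chain $\Sigma$, lifts $\Sigma$ to $\R^n$ compatibly with a lift of $\Gamma$, and concludes by a coefficient-cancellation computation that the stray boundary terms vanish, forcing $\bar\gamma(1)=\bar\gamma(0)$. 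You instead argue conceptually: since $\R^n$ is abelian, $l$ kills the commutator subgroup of $\pi_1(X,x_0)$, which by Hurewicz is exactly the kernel of $\pi_1(X,x_0)\to H_1(X;\Z)$, so $l$ factors through $H$; your basepoint bookkeeping (conjugating by a path $\delta$, using $l(\delta^{-1})=-l(\delta)$ and invariance of the homology class under free homotopy) correctly disposes of the only technical wrinkle, and your path-connectedness remark via surjectivity of $p^T$ is fine. Your alternative formulation---that $l$ defines a singular $1$-cocycle because it vanishes on null-homotopic loops, hence vanishes on boundaries---is equally valid and is essentially a direct proof of the relevant half of Hurewicz. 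What your route buys: it is shorter, uniform in all dimensions (no special case $n=1$, which the paper needs because a graph carries no simplicial $2$-chains, whereas singular $2$-chains always exist), and it avoids the subdivision and compatible-lifting bookkeeping that is the delicate part of the paper's argument. What the paper's route buys: it stays entirely at the level of lifts and chains in $\R^n$, never invoking the Hurewicz theorem, which keeps the argument self-contained and matches the geometric flavor of the surrounding lemmas.
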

\begin{proof} By the above remark on concatenations, we may assume that $\gamma(0)$ is a vertex in $X$. Suppose first that $n=1$. We may then homotope $\gamma$ to a loop that doesn't turn in the interior of any edge and this can be done without affecting $l$. We may assume that $\gamma$ itself has this property. Now $\gamma=\gamma_1*\cdots*\gamma_k$ is a concatenation of paths, each running along a single edge exactly once. If $\gamma_i$ and $\gamma_j$ run along the same edge, but in different directions, then $l(\gamma_i)=-l(\gamma_j)$. Since $[\gamma]=0$, each $\gamma_i$ can be paired with a $\gamma_j$ so that elements of a pair traverse the same edge, but in opposite directions.
Thus $l(\gamma)=\sum l(\gamma_i)=0$.

Suppose now that $n>1$. After sufficient simplicial subdivision of $X$, we may homotope $\gamma$ to a loop
that lies in the 1-skeleton of this subdivision, only self-intersects at vertices of this subdivision, and that has a continuous lift that also runs from $\bar{\gamma}(0)$ to $\bar{\gamma}(1)$. Let us replace $\gamma$ with such a curve.
Let $\Gamma=\sum e_i$ be the simplicial 1-chain representing $\gamma$: the $e_i$ are the oriented 1-simplices in the image of $\gamma$ and we have modified $\gamma$ so that $e_i\ne\pm e_j$ for $i\ne j$. Since $\gamma$ is homologous to zero in $X$, there is a simplicial 2-chain $\Sigma=\sum n_j\sigma_j$ in the subdivided  simplicial structure of $X$ with $\partial \Sigma=\Gamma$. After further subdivision, we may assume that each $\sigma_j$ has at most one edge on $\Gamma$.

Now there is a lift of $\Gamma$ to a 1-chain $\bar{\Gamma}$ in $\R^n$ with $\partial \bar{\Gamma}=\bar{\gamma}(1)-\bar{\gamma}(0)$.  We may lift $\Sigma$ to a 2-chain $\bar{\Sigma}=\sum n_j\bar{\sigma}_j$ in $\R^n$  in which each $\bar{\sigma}_j$ is a lift of $\sigma_j$ and $\bar{\sigma}_j$ has an edge on $\bar{\Gamma}$ if and only if $\sigma_j$ has an edge on $\Gamma$ (that is,  $\sigma_j$ meets $\Gamma$ along $e_i$ if and only if $\bar{\sigma}_j$ meets $\bar{\Gamma}$ along $\bar{e}_i$).

Now $\partial(\bar{\Sigma})=\bar{\Gamma}+\bar{\Delta}$ with none of the elementary 1-simplices of $\bar{\Delta}$ in $\bar{\Gamma}$. Since $\partial^2(\bar{\Sigma})=0$, $$\partial(\bar{\Delta})=\bar{\gamma}(0)-\bar{\gamma}(1).$$
Each edge of $\bar{\Delta}$ lies over some edge (of some simplex in $\Sigma$) that does not occur in $\partial\Sigma=\Gamma$: write $\bar{\Delta}=\sum _i\sum_{k=1}^{j_i}(-1)^{m(i,k)}e'_{i,k}$ where, for each $i$, the $e'_{i,k}$ all lie over the same edge $e'_i$ of one of the $\sigma_j$ in $\Sigma$, and $e'_i\ne e'_s$ for $i\ne s$. Pushed forward into
$X$, the equation $\partial(\bar{\Sigma})=\bar{\Gamma}+\bar{\Delta}$ reads $\partial{\Sigma}=\Gamma+
\sum_i(\sum_{k=1}^{j_i}(-1)^{m(i,k)})e'_i$. Since $\partial{\Sigma}=\Gamma$, we see that $$\sum_{k=1}^{j_i}(-1)^{m(i,k)}=0$$ for all $i$. Now, from the displayed equation above, we have the `formal' statement
$$\bar{\gamma}(0)-\bar{\gamma}(1)=\sum_i\sum_{k=1}^{j_i}(-1)^{m(i,k)}\partial e'_{i,k}.$$
But this is also valid as a statement of equality between elements in the additive group $\R^n$ and, as group elements , $\partial e'_{i,k}=\partial e'_i$ for each $i,k$. Thus the right hand side of the last displayed equation is, as a group element, $\sum_i(\sum_{k=1}^{j_i}(-1)^{m(i,k)})\partial e'_i =0$. Hence $l=0$.
\end{proof}

Thus, since each element of $H$ is the homology class of a loop, $l:H\to\R^n$ given by $l([\gamma])=l(\gamma)$ is a well-defined homomorphism. Note that if $\gamma$ is a loop defined by return vector $v$,
then $l([\gamma])=v$.

\begin{lem}\label{f_*} $l\circ f_*=\Lambda\circ l$.
\end{lem}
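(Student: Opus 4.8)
The plan is to reduce the identity to the level of individual loops and then transport the lifting construction through the inflation, using the compatibility relation $f\circ p^Q=p^{\Phi(Q)}\circ\Lambda$ established above together with the linearity of $\Lambda$. Since every element of $H$ is the class $[\gamma]$ of a loop $\gamma$ in $X$, and since both $l\circ f_*$ and $\Lambda\circ l$ are homomorphisms $H\to\R^n$, it suffices to verify $l(f_*[\gamma])=\Lambda(l[\gamma])$ for each loop $\gamma$. Because $f_*$ is induced by $f$, we have $f_*[\gamma]=[f\circ\gamma]$, so the task becomes the computation of $l(f\circ\gamma)$.

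First I would fix a lift $\bar\gamma:[a,b]\to\R^n$ of $\gamma$ and let $\hat Q$ be the admissible patch consisting of the tiles traversed by $\bar\gamma$, so that $\bar\gamma([a,b])\subset spt(\hat Q)$ and $p^{\hat Q}\circ\bar\gamma=\gamma$. The central claim is that $\Lambda\circ\bar\gamma$ is a lift of $f\circ\gamma$. To see this, recall that $\Phi(\hat Q)$ is again admissible and that $spt(\Phi(\hat Q))=\Lambda\, spt(\hat Q)$, so $\Lambda\bar\gamma([a,b])\subset spt(\Phi(\hat Q))$; moreover $\Lambda\circ\bar\gamma$ is continuous. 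The compatibility relation then gives, for every $t$, $p^{\Phi(\hat Q)}(\Lambda\bar\gamma(t))=f(p^{\hat Q}(\bar\gamma(t)))=f(\gamma(t))=(f\circ\gamma)(t)$, which is exactly the defining property of a lift.

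Once $\Lambda\circ\bar\gamma$ is recognized as a lift of $f\circ\gamma$, the definition of $l$ together with the linearity of $\Lambda$ finishes the computation: $l(f\circ\gamma)=\Lambda\bar\gamma(b)-\Lambda\bar\gamma(a)=\Lambda(\bar\gamma(b)-\bar\gamma(a))=\Lambda(l(\gamma))$. Combining the three steps yields $l(f_*[\gamma])=l([f\circ\gamma])=l(f\circ\gamma)=\Lambda(l(\gamma))=\Lambda(l[\gamma])$, as required.

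I expect the only genuine point requiring care---rather than a true obstacle---to be the verification that $\Lambda\circ\bar\gamma$ qualifies as a lift in the precise sense defined above: one must check that the coherent admissible patch $\hat Q$ carried along by $\bar\gamma$ maps under $\Phi$ to a patch through which $\Lambda\circ\bar\gamma$ genuinely passes, and that the value of $p^{\Phi(\hat Q)}$ along this path is unambiguous at tile boundaries (handled by the gluing and admissibility conditions that make each $p^Q$ well defined and continuous). Everything else is a direct application of the already-established identity $f\circ p^Q=p^{\Phi(Q)}\circ\Lambda$ and the fact that $\Lambda$ is linear.
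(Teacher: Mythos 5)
Your proposal is correct and follows essentially the same route as the paper's own proof: both deduce from the relation $f\circ p^Q=p^{\Phi(Q)}\circ\Lambda$ that $\Lambda\circ\bar{\gamma}$ is a lift of $f\circ\gamma$, and then compute $l([f\circ\gamma])=\Lambda\bar{\gamma}(b)-\Lambda\bar{\gamma}(a)=\Lambda(l([\gamma]))$. The extra care you take in exhibiting the admissible patch $\hat{Q}$ and checking the lift property for $\Phi(\hat{Q})$ is exactly the detail the paper leaves implicit.
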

\begin{proof} It follows from $f\circ p^Q=p^{\Phi(Q)}\circ\Lambda$ that if $\bar{\gamma}$ is a lift of $\gamma$ to $\R^n$, then $\Lambda\circ\bar{\gamma}$ is a lift of $f\circ\gamma$ to $\R^n$.
Thus $l([f\circ\gamma])=\Lambda\circ\bar{\gamma}(1)-\Lambda\circ\bar{\gamma}(0)=\Lambda(\bar{\gamma}(1)-\bar{\gamma}(0))=\Lambda(l([\gamma])$.
\end{proof}
Let us call a vector $v\in\R^n$ a {\em generalized return vector} for $\Phi$ if there are $v_i\in\R^n$ and $T_i\in\Omega$, $i=1,\ldots,k$, with $v=v_1+\cdots+v_k$, so that $B_0[T_i+v_i]=B_0[T_{i+1}]$ for $i=1,\ldots,k-1$,
and $B_0[T_k+v_k]=B_0[T_1]$. Let $GR=GR(\Phi)$ be collection of generalized return vectors.
It is a consequence of the next lemma that $GR(\Phi)$ is a subgroup of $\R^n$.

\begin{lem}\label{H} Let $X$ be the collared Anderson-putnam complex for $\Phi$ and let $H=H_1(X;\Z)$.
Then $l(H)=GR(\Phi)$.
\end{lem}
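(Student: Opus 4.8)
The plan is to prove the two inclusions $GR(\Phi)\subseteq l(H)$ and $l(H)\subseteq GR(\Phi)$ separately, using in both directions the dictionary between paths in $X$ and their canonical lifts to $\R^n$ furnished by the maps $p^T$.

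For $GR(\Phi)\subseteq l(H)$ I would turn a generalized return chain directly into a loop. Suppose $v=v_1+\cdots+v_k$ is witnessed by tilings $T_1,\dots,T_k\in\Omega$ with $B_0[T_i+v_i]=B_0[T_{i+1}]$ (indices cyclic). Set $\gamma_i(t):=p^{T_i}(-tv_i)$, $t\in[0,1]$. Each $\gamma_i$ is a path in $X$ whose lift \emph{via} $T_i$ is the straight segment $t\mapsto -tv_i$ (this segment satisfies $p^{T_i}(-tv_i)=\gamma_i(t)$, so it is the lift by uniqueness), and hence $l(\gamma_i)=-v_i$. The equality $B_0[T_i+v_i]=B_0[T_{i+1}]$ forces $p^{T_i}(-v_i)=p^{T_{i+1}}(0)$, so the concatenation $\gamma:=\gamma_1*\cdots*\gamma_k$ is a genuine loop in $X$; by additivity of $l$ over concatenation, $l([\gamma])=-\sum_i v_i=-v$, and therefore $v=l(-[\gamma])\in l(H)$.

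For $l(H)\subseteq GR(\Phi)$ I would reverse this procedure. Every class in $H$ is represented by a loop $\gamma$, and since $l$ is homotopy-invariant I may first put $\gamma$ into general position with respect to the cell structure of $X$ (transverse to the codimension-one skeleton, meeting it in finitely many points, and avoiding all lower strata) without altering $l(\gamma)$. Fixing a lift $\bar\gamma$ from $x_0$ to $x_0+v$ and cutting $[0,1]$ so that each subinterval contains exactly one boundary crossing and has endpoints $y_{i-1},y_i$ lying in the interiors of top cells, I write $\gamma$ as a concatenation of segments. For each segment I choose a tiling $T_i\in\Omega$ that realizes, at the prescribed positions, the two tiles met along the crossed face --- possible because translation invariance of $\Omega$ together with the occurrence of every prototile lets one place any tile anywhere inside some tiling. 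Setting $v_i:=y_{i-1}-y_i$ and $\tilde T_i:=T_i-y_{i-1}$, one checks that $B_0[\tilde T_i+v_i]$ and $B_0[\tilde T_{i+1}]$ are both the (single) tile met at the generic node $y_i$, and are equal because $p^{T_i}(y_i)=\gamma(s_i)=p^{T_{i+1}}(y_i)$; the cyclic condition closes because $\gamma$ is a loop. This exhibits a generalized return chain summing to $-l(\gamma)$, and applying the same construction to $\gamma^{-1}$ (for which $l(\gamma^{-1})=-l(\gamma)$) shows $l(\gamma)\in GR(\Phi)$ as well.

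The step I expect to be the main obstacle is the choice of the tilings $T_i$ in the second inclusion. Keeping the nodes at generic points makes the $B_0$-matching automatic, but it forces each segment to cross a face in its interior, so that the chosen $T_i$ must contain \emph{both} tiles meeting along that face. This is exactly an instance of the gluing rule of the Anderson-Putnam complex, and the difficulty is that faces are glued according to the \emph{gluable} relation, i.e.\ the transitive closure of adjacency, whereas only a single adjacency is directly witnessed by a tiling in $\Omega$. Bridging a chain of adjacencies requires inserting intermediate tilings, each contributing a legitimate step to the generalized return chain, and one must check that the general-position reduction keeps the number of crossings finite and keeps $\bar\gamma$ off the codimension-two skeleton, where several faces could be identified at once. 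Verifying that this bookkeeping can always be carried out is the crux of the argument.
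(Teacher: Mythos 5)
Your first inclusion ($GR(\Phi)\subseteq l(H)$) is correct and is essentially the paper's (one-sentence) converse direction with the details properly filled in. The gap is in the second inclusion, and it sits exactly where you put your finger at the end: the existence of the tilings $T_i$. The justification you first offer --- that ``translation invariance of $\Omega$ together with the occurrence of every prototile lets one place any tile anywhere inside some tiling'' --- is not sound: it lets you place \emph{one} tile anywhere, but what you need is a tiling of $\Omega$ containing \emph{two} tiles in a prescribed relative position, meeting along the face your segment crosses. Such a tiling exists precisely when the two prototile faces are \emph{adjacent}, whereas a face-crossing of a general-position loop only tells you the faces are \emph{gluable}, i.e.\ related by the transitive closure of adjacency. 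You then correctly re-diagnose this as the crux and propose the right repair (inserting intermediate tilings along a chain of adjacencies), but you explicitly leave it unverified, so as written the proof is incomplete at its decisive step.

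The paper closes this at a stroke, using the definition of the Anderson--Putnam complex rather than any transversality bookkeeping. If the crossed face is the identification of a chain of prototile faces $f=h_0,h_1,\dots,h_m=g$ with consecutive pairs adjacent, then at the crossing point one inserts, for each intermediate $h_t$, a small excursion into the interior of the corresponding prototile cell and immediately back; each inserted path-followed-by-its-reverse is null-homotopic rel endpoints, so neither $[\gamma]$ nor $l(\gamma)$ changes. After the insertion, the original crossing factors as a concatenation of two-piece crossings, each making an \emph{allowed} transition, i.e.\ one witnessed by an actual tiling $T\in\Omega$ in which the two tiles meet along that face. Each such crossing lifts into that tiling, and the matching of projections at the generic junction points yields the conditions $B_0[T_t+w_t]=B_0[T_{t+1}]$ exactly as in your own computation, with displacements summing to $l(\gamma)$ up to the sign you handle via $\gamma^{-1}$. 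So your outline coincides with the paper's; what is missing is this one homotopy argument, which is what the paper invokes when it says each crossing path is homotopic rel endpoints to a product of paths making allowed transitions ``by the definition of the gluing relation.''
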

\begin{proof} If $\gamma$ is a loop in $X$. Then $\gamma$ is homotopic to a loop that factors as a product of paths $\gamma_i$ with each $\gamma_i$ a two-piece piecewise linear path from a prototile $\rho_{\alpha(i)}$ to an adjacent prototile $\rho_{\alpha(i+1)}$. Each $\gamma_i$ is itself homotopic rel endpoints to a product of two-piece piecewise linear paths each making an allowed transition between prototiles (this from the definition of the Anderson-Putnam complex: the glueing of prototiles along faces is by the smallest equivalence relation that includes the relation defined by the allowed transitions). Thus the displacement of the lift of $\gamma$ is a generalized return vector.

Conversely, each generalized return vector $v$ determines  a (at least one) loop $\gamma$ with
$l([\gamma])=v$.
\end{proof}

The {\em stable manifold} of $T'\in\Omega$ is the set $W^s(T'):=\{T\in\Omega:d(\Phi^k(T),\Phi^k(T'))\to0 \text{ as } k\to\infty\}$.

\begin{lem}\label{stable mfd} Suppose that $\Omega=\Omega_{\Phi}$ is an $n$-dimensional Pisot family tiling space whose $\R^n$-action has pure discrete spectrum and suppose that $T\in W^s(T')$. Then $T$ and $T'$ are densely eventually coincident.
\end{lem}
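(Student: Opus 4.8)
The plan is to push the asymptotic relation into the maximal equicontinuous factor, use pure discrete spectrum to rule out a persistent ``asymptotic seam,'' thereby producing a single honest coincidence at the origin, and then spread it everywhere by self-similarity. As $g$ is continuous and $g\circ\Phi=\hat F\circ g$, the hypothesis $d(\Phi^kT,\Phi^kT')\to0$ gives $d(\hat F^kg(T),\hat F^kg(T'))\to0$, so $\delta:=g(T)-g(T')$ lies on the stable leaf of $\hat F$ through $0$ and $\hat F^k\delta\to0$; since the $\R^n$-action runs along the unstable leaves of $\hat F$, this difference is translation-invariant, $g(T-x)-g(T'-x)=\delta$ for all $x\in\R^n$. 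In particular I do not expect $T\sim_{rp}T'$ (Corollary~\ref{srp} would force $\delta=0$): the coincidence sought is transverse to the equicontinuous relation and must be produced hyperbolically.

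Next I would reduce ``densely eventually coincident'' to a single coincidence at the origin. Let $E:=\{x\in\R^n:\ T\text{ and }T'\text{ are eventually coincident at }x\}$. Eventual coincidence at a point is an open condition, so $E$ is open; and because a coincidence of $\Phi^kT$ and $\Phi^kT'$ at $\Lambda^kx$ is, on applying or removing one $\Phi$, a coincidence of $\Phi^{k\pm1}T$ and $\Phi^{k\pm1}T'$ at $\Lambda^{k\pm1}x$, one has $\Lambda E=E$. If $0\in E$ then $\Phi^kT$ and $\Phi^kT'$ share the tile $\tau\ni0$ for some $k$, whence $\Lambda^{-k}(int(\tau))\subset E$ is a neighborhood of the origin; as $\Lambda$ is an expansion, $\bigcup_j\Lambda^j\big(\Lambda^{-k}int(\tau)\big)=\R^n$, so $E=\R^n$. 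Thus it suffices to show that $0\in E$, i.e. that $\Phi^kT$ and $\Phi^kT'$ share the tile at the origin for some $k$. (Mere nonemptiness of the $\Lambda$-invariant open set $E$ would not suffice -- a half-space is $\Lambda$-invariant -- which is why I insist on the origin, the natural basepoint of the bracket below.)

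The crux is to produce this origin coincidence, and it is here that pure discrete spectrum is essential. By the definition of $W^s(T')$, finite local complexity, and border forcing, for large $k$ the tilings $\Phi^kT$ and $\Phi^kT'$ agree, up to a small translation $w_k\to0$, on a ball $B_{R_k}(0)$ with $R_k\to\infty$, and the bracket $\Phi^kT'-w_k=\Phi^k(T'-\Lambda^{-k}w_k)$ agrees with $\Phi^kT$ exactly there; so $T$ is already eventually coincident with the nearby translate $T'-\Lambda^{-k}w_k$, and the whole problem is to take $w_k=0$. I would argue by contradiction: if $\Phi^kT$ and $\Phi^kT'$ carried distinct tiles at the origin for every $k$, then, re-centering the pictures at the edge of the agreement region (where the two tilings first disagree) and passing to a subsequential limit, I expect to obtain two tilings $A,A'$ that agree on a half-space -- hence are proximal, so regionally proximal, giving $g(A)=g(A')$ by Corollary~\ref{srp} -- yet differ across the seam. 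Choosing the edge basepoints generically, using minimality and unique ergodicity of the $\R^n$-action together with the a.e.\ injectivity of $g$ furnished by pure discrete spectrum, would place $g(A)$ in the full-measure set where the fiber of $g$ is a single point, forcing $A=A'$ and contradicting the seam. This is the same coincidence-rank-one mechanism from \cite{BKe} used in the proof of Theorem~\ref{main theorem}, and its careful implementation is the main obstacle: the naive limit of $(\Phi^kT,\Phi^kT')$ degenerates to a single tiling, so the nontrivial information must be recovered by re-centering at the vanishing seam and arranging the limit to be generic. Granting this, some $w_k$ is $0$, so $0\in E$, and by the reduction above $E=\R^n$; hence $T$ and $T'$ are densely eventually coincident.
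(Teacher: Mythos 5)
Your reduction in the second paragraph is where the proof breaks, and the break is fatal rather than technical: the set $E$ of points of eventual coincidence is \emph{not} $\Lambda$-invariant, and the implication ``$0\in E\Rightarrow E=\R^n$'' is false. The source of the error is an index mismatch: $x\in E$ with witness $k$ says that the tiles of $\Phi^kT$ and $\Phi^kT'$ at $\Lambda^kx$ agree, whereas $\Lambda x\in E$ requires agreement of the tiles of $\Phi^mT,\Phi^mT'$ at $\Lambda^{m+1}x$ for some $m$; applying $\Phi$ to a witness patch raises the substitution level and dilates the point \emph{in tandem} ($k\mapsto k+1$, $\Lambda^kx\mapsto\Lambda^{k+1}x$), so it can never decouple the two indices by one step. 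A concrete counterexample sits inside the paper itself: take the Period Doubling substitution $a\mapsto ab$, $b\mapsto aa$ (unit tiles, $\Lambda=2$, pure discrete spectrum by Dekking coincidence). Since $\phi^k(a)$ and $\phi^k(b)$ differ in exactly their last letter, the two tilings fixed by $\Phi^2$ (seeds $b.a$ and $a.a$) differ in exactly one tile, supported on $[-1,0]$, and for every $k$ their images under $\Phi^k$ again differ in exactly that tile. Translate both so that the unique differing tile sits over $[4,5]$ and call the results $T,T'$. Then $\Phi^kT$ and $\Phi^kT'$ differ in exactly one tile, supported on $[5\cdot2^k-1,\,5\cdot2^k]$, so $T\in W^s(T')$; and $x$ is a point of eventual coincidence iff $2^kx\notin[5\cdot2^k-1,\,5\cdot2^k]$ for some $k$, i.e. $E=\R\setminus\{5\}$. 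Here $0\in E$ and $E$ is dense (as the Lemma asserts) but $E\neq\R$, and $\Lambda E=\R\setminus\{10\}\neq E$: indeed $5/2\in E$ while $\Lambda(5/2)=5\notin E$. So both the invariance claim and the conclusion you draw from it are wrong, and no repair within this strategy is possible, because the statement you are reducing to is itself false.

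The misplacement of effort then propagates to your third paragraph: producing a coincidence \emph{at the origin} is exactly the equivalence ``$T\in W^s(T')$ iff $T$ and $T'$ are eventually coincident at $0$'' that the paper quotes from \cite{BO} (it needs no pure discrete spectrum), and, as the example above shows, it is too weak to give density. The actual content of the Lemma, and of the paper's proof, is the propagation of the stable relation to a \emph{dense set of translates}: the paper assumes there is a nonempty open $U$ with $T-x\notin W^s(T'-x)$ for all $x\in U$, fixes $x_0\in U$, and passes to limits $S,S'$ of $\Phi^{k_i}(T-x_0),\Phi^{k_i}(T'-x_0)$; your paragraph-one observation that $g(T)-g(T')$ is translation-invariant and $\hat{F}$-stably asymptotic is precisely what forces $g(S)=g(S')$ even though $x_0$ is a ``bad'' point. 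Pure discrete spectrum then makes $S$ and $S'$ proximal, so they agree on a large ball after translating by some $y$; this agreement is pulled back to $\Phi^{k_i}(T-x_0),\Phi^{k_i}(T'-x_0)$ only up to a small translation $w$, and the key hyperbolic step---the local product structure of $\hat{F}$ on $\hat{\T}^{md}$, applied to the two points $z_{k_i}-y$ and $z_{k_i}-y-w$ in one $\epsilon$-stable set---forces $w=0$, placing $x_0+\Lambda^{-k_i}y$ in $U\cap\{x:T-x\in W^s(T'-x)\}$, a contradiction. Your seam-recentering sketch contains echoes of these ingredients (limits, proximality, equality of $g$-images), but it is aimed at the origin rather than at an arbitrary point of a putative bad open set, it replaces the decisive local-product-structure step by an unproven genericity argument, and even if completed it would not close the proof because of the false reduction it feeds into.
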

\begin{proof} Let $g:\Omega\to\hat{\T}^{md}$ be the map onto the maximal equicontinuous factor, let $\hat{F}:\hat{\T}^{md}\to\hat{\T}^{md}$ be the hyperbolic automorphism with $g\circ \Phi=\hat{F}\circ g$, and let $z\mapsto z-v$ denote the Kronecker action on $\hat{\T}^{md}$, so that $g(T-v)=g(T)-v$ for all $T\in\Omega$ and $v\in\R^n$. Suppose that $T\in W^s(T')$ but that $\emptyset\ne U\subset\R^n$ is such that $T-x\notin W^s(T'-x)$ for all $x\in U$. For fixed $x_0\in U$, let $T_k:=\Phi^k(T-x_0)$, $T'_k:=\Phi^k(T'-x_0)$, $z_k:=g(T_k)$, and $z'_k:=g(T'_k)$. Then $d(z_k,z'_k)\to 0$ as $k\to\infty$.
Choose $k_i$ with $T_{k_i}\to S$, $T'_{k_i}\to S'$; then also $z_{k_i},z'_{k_i}\to z:=g(S)=g(S')$. Since the $\R^n$-action on $\Omega$ has pure discrete spectrum, $S$ and $S'$ are proximal. Thus, given $r>0$, there is $y=y(r)\in\R^n$ so that $B_r[S-y]=B_r[S'-y]$ (see \cite{BKe}). From the local product structure on $\hat{\T}^{md}$, there is $\epsilon>0$ so that if $0<|w|<\epsilon$ and $w\in\R^n$, then $\bar{z}-w\notin W^s_{\epsilon}(\bar{z}):=\{\bar{z}':d(\hat{F}^k(\bar{z})',\hat{F}^k(\bar{z})\le\epsilon \text{ for all }k\ge 0\}$, for all $\bar{z}\in\hat{\T}^{md}$. Let $r$ be large enough so that if $\bar{T},\bar{T}'\in\Omega$ and $B_r[\bar{T}]=B_r[\bar{T}']$, then $g(\bar{T})\in W^s_{\epsilon}(g(\bar{T}'))$. Let $y=y(r)$, as above, and let $i$ be large enough so that $B_r[T_{k_i}-y-w]=B_r[T'_{k_i}-y]$ with $|w|<\epsilon$. We may also take $i$ large enough so that $x_0+\Lambda^{-k_i}y\in U$.
In $\hat{\T}^{md}$ it is the case that if $\bar{z}\in W^s_{\epsilon}(\bar{z}')$, then $\bar{z}-v\in W^s_{\epsilon}(\bar{z}'-v)$ for all $v\in\R^n$. Thus , we may further increase $i$ so that $z_{k_i}-y\in W^s_{\epsilon}(z'_{k_i}-y)$. But $z_{k_i}-y-w\in W^s_{\epsilon}(z'_{k_i}-y)$ and $|w|<\epsilon$. Thus $w=0$ so that $T_{k_i}-y\in W^s(T'_{k_i}-y)$, and then $T-x\in W^s(T'-x)$ for $x:=x_0+\Lambda^{-k_i}y\in U$, contradicting the choice of $U$. Thus $T-x\in W^s(T'-x)$ for a dense set of $x\in\R^n$ and it only remains to observe that $T$ is eventually coincident with $T'$ at $x$ if and only if $T\in W^s(T')$ (see, for example \cite{BO}).
\end{proof}

\begin{pro}\label{return} Suppose that the $\R^n$-action on $\Omega$ has pure discrete spectrum and $T\in\Omega$. Then $$\{v\in\R^n:T-v \text{ is densely eventually coincident with }T\}=\cup_{k\in\Z}\Lambda^k\mathcal{R}.$$
\end{pro}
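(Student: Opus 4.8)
The plan is to establish the two inclusions separately; the reverse inclusion will rest almost entirely on Lemma \ref{stable mfd}.

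For the inclusion $\subseteq$ I would argue exactly as sketched at the start of this section. If $T-v$ is densely eventually coincident with $T$, then in particular they are eventually coincident at some point $x$, so $B_0[\Phi^k(T-x)]=B_0[\Phi^k((T-v)-x)]$ for some $k$. Since $\Phi^k(T-x)=\Phi^k(T)-\Lambda^k x$ and $\Phi^k((T-v)-x)=\Phi^k(T)-\Lambda^k v-\Lambda^k x$, this says that $\Phi^k(T)$ and $\Phi^k(T)-\Lambda^k v$ share a tile, whence $\Lambda^k v\in\mathcal R$ and $v\in\Lambda^{-k}\mathcal R\subseteq\cup_{j\in\Z}\Lambda^j\mathcal R$.

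For $\supseteq$, fix $v\in\Lambda^k\mathcal R$ and write $v=\Lambda^k w$ with $w\in\mathcal R$ and $k\in\Z$. Put $T':=\Phi^{-k}(T)\in\Omega$, which makes sense since $\Phi$ is a homeomorphism of $\Omega$. Because $w$ is a return vector, $T'$ and $T'-w$ share a tile $\tau$; choosing $x_1\in\mathrm{int}(spt(\tau))$ makes $T'$ and $T'-w$ coincident at $x_1$. I would next invoke the easy half of the pointwise coincidence/stable correspondence used at the close of the proof of Lemma \ref{stable mfd} (see \cite{BO}): since $\Phi^m(\tau-x_1)$ is a common patch of $\Phi^m(T'-x_1)$ and $\Phi^m((T'-w)-x_1)$ that covers a ball about the origin of radius tending to infinity (as $\Lambda$ is expanding), one gets $T'-x_1\in W^s((T'-w)-x_1)$. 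The stable relation is $\Phi$-invariant, so applying the homeomorphism $\Phi^k$ and setting $x_0:=\Lambda^k x_1$ yields $T-x_0=\Phi^k(T'-x_1)\in W^s((T-v)-x_0)$, where I have used $\Phi^k(T')=T$ and $\Phi^k(T'-w)=T-\Lambda^k w=T-v$. Lemma \ref{stable mfd} (this is the one place pure discrete spectrum is used) now shows that $T-x_0$ and $(T-v)-x_0$ are densely eventually coincident, and translating the dense coincidence set by $x_0$ gives that $T$ and $T-v$ are densely eventually coincident. As $v\in\cup_k\Lambda^k\mathcal R$ was arbitrary, this is the inclusion $\supseteq$.

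The substantive content is carried entirely by Lemma \ref{stable mfd}, which upgrades a single stable-manifold relation into dense eventual coincidence; the remainder is bookkeeping with the expansion $\Lambda$ and the homeomorphism $\Phi$. The one point demanding care, and the likely main obstacle, is the local fact that a single shared tile produces the stable relation between the recentered tilings (and, dually for $\subseteq$, that eventual coincidence at a point produces a genuine shared tile after inflation). Both are intuitively forced by the expansivity of $\Lambda$ but require attention to the case where the distinguished point falls on a tile boundary; this is harmless, since tile interiors are dense and one needs only a single good point for $\supseteq$ and merely some point for $\subseteq$.
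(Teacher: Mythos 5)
Your proposal is correct and follows essentially the same route as the paper: the inclusion $\subseteq$ is the observation already made at the start of the section (eventual coincidence at a point forces a shared tile after inflation, hence a return vector up to a power of $\Lambda$), and the inclusion $\supseteq$ passes to $\Phi^{-k}(T)$, uses the shared tile from the return vector to get the stable-manifold relation at an interior point, applies $\Phi^k$, and invokes Lemma \ref{stable mfd} exactly as the paper does. The only differences are cosmetic (writing $v=\Lambda^k w$ rather than translating by $\Lambda^k v$ directly, and spelling out the shared-tile-implies-stable step that the paper cites to \cite{BO}).
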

\begin{proof} We have already observed that  $$\{v\in\R^n:T-v \text{ is densely eventually coincident with }T\}\subset\cup_{k\in\Z}\Lambda^k\mathcal{R}.$$
Suppose that $v\in\mathcal{R}$ and $k\in\Z$. There is then $\tau\in\Phi^{-k}(T)-v\cap\Phi^{-k}(T)$. Pick $x$ in the interior of the support of $\tau$. Then $\Phi^{-k}(T)-v-x\in W^s(\Phi^{-k}(T)-x)$. It follows that $T-\Lambda^kv-\Lambda^kx\in W^s(T-\Lambda^kx)$ so that, by Lemma \ref{stable mfd}, $T-\Lambda^kv-\Lambda^kx$ is densely eventually coincident with $T-\Lambda^kx$. Thus $T-\Lambda^kv$ is densely eventually coincident with $T$.
\end{proof}

\begin{pro}\label{GR} Suppose that $\Omega_{\Phi}$ is an $n$-dimensional Pisot family tiling space whose $\R^n$-action has pure discrete spectrum. Then for any admissible tiling $S$ of $\R^n$ for  $\Phi$, and any $v\in GR(\Phi)$, $S$ and $S-v$ are densely eventually coincident.
\end{pro}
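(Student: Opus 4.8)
The plan is to feed the homological description of $GR(\Phi)$ into the dynamical input of Lemma \ref{stable mfd} (and Proposition \ref{return}), first reducing to a single loop and then spreading coincidence by density. I would begin by recording that dense eventual coincidence is transitive on tilings: eventual coincidence at a point is an open condition, so the set of points at which two tilings are eventually coincident is open; if $S,S'$ are densely eventually coincident and $S',S''$ are densely eventually coincident, the two coincidence sets are open and dense, hence so is their intersection, and $S,S''$ are densely eventually coincident. Since translation carries admissible tilings to admissible tilings, and since $GR(\Phi)=l(H)$ with $H=H_1(X;\Z)$ finitely generated (Lemma \ref{H}), it then suffices to prove the statement for $v$ ranging over a symmetric generating set $\{l([\gamma_1]),\dots,l([\gamma_r])\}$ of $GR(\Phi)$. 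Indeed, for $v=l([\gamma_{i_1}])+\cdots+l([\gamma_{i_p}])$ one walks from $S$ to $S-v$ through the admissible tilings obtained by peeling off one generator at a time, applying the single-generator case to the current (still admissible) tiling at each step and concluding by transitivity.

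With $v=l([\gamma])$ fixed, I would recast eventual coincidence intrinsically on $(X,f)$. Using $p^{S-v}(x)=p^{S}(x+v)$ together with the iterated intertwining $f^{k}\circ p^{S}=p^{\Phi^{k}(S)}\circ\Lambda^{k}$, one sees that (for $x$ with $\Lambda^{k}x$ in tile interiors) $S$ and $S-v$ are eventually coincident at $x$ precisely when $f^{k}(p^{S}(x))=f^{k}(p^{S}(x+v))$ for some $k\ge0$. Thus, writing $\mathcal{E}=\{(a,b)\in X\times X: f^{k}(a)=f^{k}(b)\ \text{for some}\ k\ge0\}$, the generator case becomes the assertion that $\{x\in\R^{n}:(p^{S}(x),p^{S}(x+v))\in\mathcal{E}\}$ is dense. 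The point of this reformulation is that $\mathcal{E}$ is a subset of $X\times X$ intrinsic to $(X,f)$ and independent of the tiling, which is what should let the argument survive the weakening from allowed to merely admissible. The homological bookkeeping, in particular Lemma \ref{f_*} giving $\Lambda^{N}v=l(f_{*}^{N}[\gamma])$, I would use only to keep the relation $\mathcal{E}$ equivariant under inflation while passing to large scales.

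The dynamical heart is to show $\mathcal{E}$ is large, and here pds enters through Lemma \ref{stable mfd}. I would realize $v$ by the chain $v=v_1+\cdots+v_m$ with $B_0[T_i+v_i]=B_0[T_{i+1}]$ cyclically, $T_i\in\Omega$ (from the proof of Lemma \ref{H}), and work on the factor $g:\Omega\to\hat{\T}^{md}$, where the Kronecker action and local product structure are available. The aim is to convert the generalized-return relations of the chain into forward-asymptotics under $f$ along tilings of $\Omega$ (equivalently, stable-manifold relations), exactly as in Proposition \ref{return}, so that Lemma \ref{stable mfd} delivers dense eventual coincidence for the $\Omega$-tilings involved, and hence populates $\mathcal{E}$ on a correspondingly dense family of point-pairs $(a,b)=(p^{T}(x),p^{T}(x+v))$ coming from $T\in\Omega$.

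I expect the main obstacle to be exactly the last step: upgrading this to density of $\{x:(p^{S}(x),p^{S}(x+v))\in\mathcal{E}\}$ for an $S$ that is only admissible. Two difficulties are intertwined here. First, the coincidences produced by following the loop $\gamma$ are a priori only sparse (they occur at the discrete set of base points where the loop can be traced), so pds must be used to spread them, presumably by the same Baire-category mechanism that drives Theorem \ref{main theorem}, now run on the intrinsic relation $\mathcal{E}$. Second, and more seriously, the finite patches of an admissible $S$ need not be allowed, so $S\notin\Omega$ and $g(S)$ is simply undefined; Lemma \ref{stable mfd} and the factor map cannot be applied to $S$ directly. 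The resolution I would pursue is that, because eventual coincidence at $x$ depends only on the pair $(p^{S}(x),p^{S}(x+v))$ and on $f$ through the tiling-independent set $\mathcal{E}$, the density established for $\Omega$-tilings transfers to $S$ through the continuous map $p^{S}:\R^{n}\to X$ by locality alone, with the chain tilings $T_i$ supplying the necessary coincident point-pairs irrespective of whether the ambient patches of $S$ are globally allowed.
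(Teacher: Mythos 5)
Your first half is essentially the paper's: openness of eventual coincidence makes $\sim_{dc}$ an equivalence relation, $B_0[T_i+v_i]=B_0[T_{i+1}]$ puts $T_i+v_i$ in $W^s(T_{i+1})$, and Lemma \ref{stable mfd} together with translation equivariance, transitivity, and minimality yields $T\sim_{dc}T-v$ for every $T\in\Omega$ and $v\in GR(\Phi)$ (your reduction to a generating set of $l(H)$ is harmless but unnecessary, since the defining chain of a generalized return vector already handles an arbitrary $v$). The gap is exactly where you predicted it, and your proposed resolution does not close it. Density of $\{y:(p^{T}(y),p^{T}(y+v))\in\mathcal{E}\}$ for every $T\in\Omega$ transfers to an admissible $S$ "by locality alone" only if each pair $(p^{S}(x),p^{S}(x+v))$ can be matched with a pair $(p^{T}(y),p^{T}(y+v))$ arising from a \emph{single} $T\in\Omega$; that requires the two-tile patch $\{\sigma,\tau\}\subset S$, where $\sigma$ is the tile of $S$ at $x$ and $\tau$ the tile at $x+v$, to be an allowed patch. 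Admissibility does not give this: it constrains only how tiles meet along faces, not long-range relative positions, and the tilings the proposition is actually needed for (the periodic tilings $\bar{Q}$ fed into Theorem \ref{main theorem}, which by aperiodicity are never in $\Omega$) have two-tile configurations at displacement $v$ that typically occur in no element of $\Omega$. Nor can the chain tilings $T_i$ realizing $v$ as a generalized return vector supply the missing pairs: they are pinned to the patches $B_0[T_i+v_i]$, and bear no relation to the particular tiles $\sigma$ and $\tau$ of $S$.

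What is needed — and what the paper's second paragraph does — is a \emph{second} chain, whose existence is the real content of the admissibility hypothesis. Fix $\sigma,\tau\in S$ with $U:=\mathring{\sigma}\cap(\mathring{\tau}-v)\ne\emptyset$. Because the faces of $S$ are gluable and gluability is generated by adjacencies realized inside tilings of $\Omega$, there are $T_1,\ldots,T_k\in\Omega$ with $\sigma\in T_1$, $\tau\in T_k$, and $T_i\cap T_{i+1}\ne\emptyset$ for each $i$. Consecutive members share a tile, hence (after translating that tile to the origin) lie on a common stable manifold, so Lemma \ref{stable mfd} makes them densely eventually coincident; the $\Omega$-case already proved gives $T_1\sim_{dc}T_1-v$. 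Writing $a\ \mathcal{E}\ b$ for $(a,b)\in\mathcal{E}$ and using that $p^{S}(x)=p^{T_1}(x)$ on $\mathring{\sigma}$ and $p^{S}(x+v)=p^{T_k}(x+v)$ for $x+v\in\mathring{\tau}$, transitivity on the open set $U$ yields, for a dense set of $x\in U$,
\[
p^{S}(x)=p^{T_1}(x)\ \mathcal{E}\ p^{T_1}(x+v)\ \mathcal{E}\ p^{T_2}(x+v)\ \mathcal{E}\ \cdots\ \mathcal{E}\ p^{T_k}(x+v)=p^{S}(x+v).
\]
Since $\bigcup_{\sigma,\tau}\bigl(\mathring{\sigma}\cap(\mathring{\tau}-v)\bigr)$ is dense in $\R^n$, this gives $S\sim_{dc}S-v$. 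Without this admissibility-driven chaining, your argument establishes the conclusion only for $S\in\Omega$, which is Proposition \ref{return}, not Proposition \ref{GR}.
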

\begin{proof} Let $\sim_{dc}$ be the densely eventually coincident relation ($T\sim_{dc} T' \Leftrightarrow T-x$ is eventually coincident with $T'-x$ for a dense set of $x\in\R^n$). Since eventual coincidence at a point is an open condition, $\sim_{dc}$ is an equivalence relation. Suppose that $T_i\in\Omega$ and $v_i\in\R^n$, $i=1,\ldots, k$, are such that $B_0[T_i +v_i]=B_0[T_{i+1}]$, for $i=1,\ldots,k-1$, and
$B_0[T_k+v_k]=T_1$. From the above lemma, $T_{i+1}\sim_{dc} T_i+v_i$, for $i=1,\ldots,k$, with $T_{k+1}:=T_k+v_k$, and $T_1\sim_{dc} T_{k+1}$. Clearly, $T\sim_{dc} T'\Rightarrow T-w\sim_{dc} T'-w$, so we have $T_1+v\sim_{dc} T_{k+1}$, with $v:=v_1+\cdots+v_k$. Thus $T_1+v\sim_{dc} T_1$. Now let $T$ be any element of $\Omega$ and let $T_1$ and $v$ be as above. Let $w\in\R^n$ be so that $B_0[T-w] =B_0[T_1]$. Then $T-w\sim_{dc} T_1 \sim_{dc} T_1+v$. But also, $T-w-v\sim_{dc} T_1-v$, so $T-v\sim_{dc} T$. That is, $T-v\sim_{dc} T$ for all $T\in\Omega$ and all $v\in GR(\Phi)$.

Now suppose that $S$ is admissible for $\Phi$ and let $v\in GR(\Phi)$. Let $\sigma,\tau\in S$ be such that $U=U(\sigma,\tau):=int(spt(\sigma)\cap(int(spt(\tau))-v)\ne\emptyset$. Since $S$ is admissible, there are $T_1,\ldots,T_k\in\Omega_{\Phi}$ with $\sigma\in T_1$, $\tau\in T_2$, and $T_{i+1}\cap T_i\ne\emptyset$ for $i=1,\ldots,k-1$. Restricting $\sim_{dc}$ to $U$ (that is, by $T\sim_UT'$ we mean $T-x$ is eventually coincident with $T'-x$ for a dense set of $x\in U$) we have: $S\sim_U T_1\sim_U T_1+v\sim_U T_2+v\sim_U T_3+v\sim_U\cdots\sim T_k+v\sim_U S+v$. As $\cup_{\sigma,\tau}U(\sigma,\tau)$ is dense in $\R^n$, we have $S\sim_{dc} S+v$.
\end{proof}

\begin{ex}({\bf Table substitution}) The Table substitution $\Psi$ has inflation $\Lambda=2I$ and prototiles $\rho_1=[0,2]\times[0,1]$, $\rho_2=[0,1]\times[0,2]$ that substitute as pictured. Let $Q=\{\rho_1\}$ and $v=(1,0)$.
\begin{figure}[h]
\epsfysize=1.3truein\epsfbox{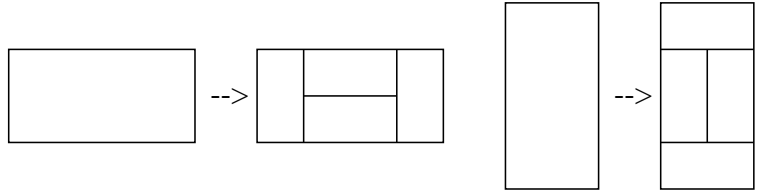}
\caption{Subdivision  for the  table}
\end{figure}
Then $\bar{Q}$ is admissible and $v$ is a return vector, but $\bar{Q}$ and $\bar{Q}-v$ are easily seen to be nowhere eventually coincident. Thus the $\R^2$-action on $\Omega_{\Psi}$ does not have pure discrete spectrum.
\end{ex}

\begin{pro}\label{eigenvalues} Suppose that $\Omega_{\Phi}$ is an $n$-dimensional substitution tiling of $(m,d)$-Pisot family type whose $\R^n$ action has pure discrete spectrum. If the product of the nonzero eigenvalues of $f_*:H_1(X;\Q)\to H_1(X;\Q)$ is $\pm 1$, then
$GR(\Phi)=\cup_{k\in\Z}\Lambda^k\mathcal{R}$.
\end{pro}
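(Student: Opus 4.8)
The plan is to establish the two inclusions separately. The inclusion $GR(\Phi)\subseteq\cup_{k\in\Z}\Lambda^k\mathcal{R}$ is immediate and does not use the eigenvalue hypothesis: by Proposition \ref{GR}, $T$ and $T-v$ are densely eventually coincident for every $T\in\Omega$ and every $v\in GR(\Phi)$, and by Proposition \ref{return} the set of such $v$ is exactly $\cup_{k\in\Z}\Lambda^k\mathcal{R}$. Since $\mathcal{R}\subseteq GR(\Phi)$ and $\Lambda\,GR(\Phi)\subseteq GR(\Phi)$ (the latter from $l\circ f_*=\Lambda\circ l$ of Lemma \ref{f_*} together with $GR(\Phi)=l(H)$ of Lemma \ref{H}), we get $\Lambda^k\mathcal{R}\subseteq GR(\Phi)$ for every $k\geq 0$. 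Thus everything reduces to the claim $\Lambda^{-1}\mathcal{R}\subseteq GR(\Phi)$, and for this it suffices to prove $\Lambda\,GR(\Phi)=GR(\Phi)$, i.e.\ that $\Lambda$ acts surjectively (hence bijectively) on the group $GR(\Phi)$; once this is known, $\Lambda^kGR(\Phi)=GR(\Phi)$ for all $k\in\Z$ and so $\cup_{k\in\Z}\Lambda^k\mathcal{R}\subseteq GR(\Phi)$.

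To analyze $\Lambda$ on $GR(\Phi)=l(H)$, note that $H=H_1(X;\Z)$ is finitely generated ($X$ is a finite complex), so $GR(\Phi)\subseteq\R^n$ is free abelian of finite rank. Write $H_\Q=H_1(X;\Q)$, extend $l$ $\Q$-linearly to $l\colon H_\Q\to\R^n$, and set $G_\Q:=l(H_\Q)$ and $K:=\ker(l|_{H_\Q})$. The relation $\Lambda\circ l=l\circ f_*$ shows $K$ is $f_*$-invariant, so $f_*$ induces $\bar f_*$ on $H_\Q/K$, and the induced isomorphism $\bar l\colon H_\Q/K\to G_\Q$ conjugates $\bar f_*$ to $\Lambda|_{G_\Q}$. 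Since $\Lambda$ is invertible on $\R^n$ and $G_\Q$ is a $\Lambda$-invariant finite-dimensional subspace, $\Lambda|_{G_\Q}$, hence $\bar f_*$, is invertible. Because $\Lambda$ maps the lattice $GR(\Phi)$ injectively into itself, the index $[GR(\Phi):\Lambda\,GR(\Phi)]$ equals $|\det(\Lambda|_{G_\Q})|=|\det\bar f_*|$, a positive integer; the desired equality $\Lambda\,GR(\Phi)=GR(\Phi)$ is thus equivalent to $|\det\bar f_*|=1$.

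It remains to extract $|\det\bar f_*|=1$ from the hypothesis, which is where the eigenvalue condition enters. The $f_*$-equivariant exact sequence $0\to K\to H_\Q\to H_\Q/K\to 0$ factors the characteristic polynomial of $f_*$ as $p(t)=p_K(t)\,p_Q(t)$, where $p_K,p_Q$ are the characteristic polynomials of $f_*|_K$ and $\bar f_*$. Since $p(t)\in\Z[t]$ is monic and both factors are monic in $\Q[t]$, Gauss's lemma gives $p_K,p_Q\in\Z[t]$. Writing each as a power of $t$ times its nonzero part, the product of the nonzero eigenvalues of $f_*$ equals, up to sign, the product of two integers: the constant term of the nonzero part of $p_K$, and $p_Q(0)=\pm\det\bar f_*$. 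Both are nonzero (the second because $\bar f_*$ is invertible), so each has absolute value at least $1$; the hypothesis says their product has absolute value $1$, forcing $|p_Q(0)|=1$, i.e.\ $|\det\bar f_*|=1$, as required.

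The main obstacle is precisely this last step. A priori $K=\ker(l|_{H_\Q})$ can contain generalized eigenvectors of $f_*$ with nonzero eigenvalue, so the eigenvalues of $\bar f_*$ need not be all the nonzero eigenvalues of $f_*$, and one cannot simply read $\det\bar f_*$ off the hypothesis. The integrality supplied by Gauss's lemma rescues the argument: the contribution of $K$ and the contribution of $\bar f_*$ to the product of nonzero eigenvalues are each integers of absolute value at least $1$, and since their product is $\pm 1$, both must be $\pm 1$ individually.
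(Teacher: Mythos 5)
Your proof is correct; every step checks out, including the two standard facts you lean on (the index formula $[L:A(L)]=|\det A|$ for an injective endomorphism of a finite-rank lattice, and the Gauss's-lemma corollary that monic rational factors of a monic integral polynomial are themselves integral). However, your route to the inclusion $\cup_{k\in\Z}\Lambda^k\mathcal{R}\subseteq GR(\Phi)$ is genuinely different from the paper's; only the other inclusion is handled identically (via Propositions \ref{GR} and \ref{return}). The paper stays inside integral homology: since the eventual range of $f_*$ over $\Q$ is exactly the span of the generalized eigenspaces for the nonzero eigenvalues, the hypothesis applies verbatim to give that $f_*$ carries the lattice $H_{ER}=f_*^j(H_{free})$ ($j$ large) onto itself, i.e.\ $f_*|_{H_{ER}}$ is invertible over $\Z$; then, for a return vector $v=l([\gamma])$, it writes $[\gamma]=[\alpha]+[\beta]$ with $[\alpha]\in H_{ER}$ and $f_*^j([\beta])$ torsion, deduces $l([\alpha])=v$ from Lemma \ref{f_*}, and obtains $\Lambda^kv=l\bigl((f_*|_{H_{ER}})^k([\alpha])\bigr)\in l(H)=GR(\Phi)$ for all $k\in\Z$ by Lemmas \ref{f_*} and \ref{H}. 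You instead push everything forward to the lattice $GR(\Phi)=l(H)$ itself, reduce the whole proposition to the single claim $\Lambda\,GR(\Phi)=GR(\Phi)$, and compute the index $[GR(\Phi):\Lambda\,GR(\Phi)]=|\det\bar f_*|$ for the map $\bar f_*$ induced on $H_1(X;\Q)/\ker(l)$. The price of working on the image side of $l$ is the subtlety you yourself flagged: $\ker(l)$ may contain nonzero-eigenvalue directions, so the hypothesis does not directly compute $\det\bar f_*$, and your integrality argument (both contributions to the product of nonzero eigenvalues are nonzero integers whose product is $\pm1$, hence each is $\pm1$) is exactly what closes that gap. Comparing the two: the paper's argument is shorter because in homology the unimodularity hypothesis needs no further processing, whereas yours requires the extra Gauss's-lemma step; in return, yours isolates the cleaner structural statement that $\Lambda$ is an automorphism of the group $GR(\Phi)$ (only implicit, a posteriori, in the paper), avoids any decomposition of individual homology classes, and makes transparent that the conclusion survives even when $l$ kills part of the nonzero spectrum of $f_*$.
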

\begin{proof}
Suppose that $u\in GR(\Phi)$. If $T\in\Omega$ then $T$ and $T-u$ are densely eventually coincident by Proposition \ref{GR}. By Proposition \ref{return}, $u\in\cup_{k\in\Z}\Lambda^kv$.

Conversely, if the product of the nonzero eigenvalues of $f_*:H_1(X;\Q)\to H_1(X;\Q)$ is $\pm 1$,
there is $j\in\N$ and a subgroup $H_{ER}$ of the free part $H_{free}$ of $H_1(X;\Z)$ so that $f_*^j(H_{free})=H_{ER}$ and
$f_*|_{H_{ER}}:H_{ER}\to H_{ER}$ is invertible. Let $v\in\mathcal{R}$ and let $[\gamma]\in H_1(X,\Z)$
be so that $l([\gamma])=v$. There are then $[\alpha]\in H_{ER}$ and $[\beta]\in H_1(X;\Z)$ so that
$[\gamma]=[\alpha]+[\beta]$ and $f_*^j([\beta])$ is a torsion element. It follows from Lemma \ref{f_*} that $l([\beta])=0$
and hence $l([\alpha])=v$. We have $\Lambda^kv=\Lambda^kl([\alpha])=l((f_*|_{H_{ER}})^k([\alpha]))\in GR(\Phi)$, by Lemmas \ref{f_*} and \ref{H}.
\end{proof}

Even when the $\R^n$ action on an $n$-dimensional Pisot family tiling space has pure discrete spectrum, it is not necessarily the case that $GR(\Phi)=\cup_{k\in\Z}\Lambda^k\mathcal{R}$. For example, the Period Doubling substitution with tiles of unit length and inflation factor 2 (presented symbolically by $a\mapsto ab$, $b\mapsto aa$) has $GR=\Z$ and $\cup_{k\in\Z}\Lambda^k\mathcal{R}=\Z[1/2]$. To obtain equality, it is necessary that the eigenvalues of $\Lambda$ be algebraic units.

Substitutions satisfying the first set of hypotheses in the following corollary are called {\em unimodular, homological Pisot} and it is conjectured (a version of the `Pisot Conjecture') that their
associated $\R^n$-actions always have pure discrete spectrum (see \cite{BBJS}).

\begin{cor} Suppose that $\Phi$ is an $n$-dimensional substitution of $(m,d)$-Pisot family type with $dim(\check{H}^1(\Omega_{\Phi};\Q))=md$ and also that the eigenvalues of the expansion $\Lambda$ are algebraic units. If the $\R^n$-action on $\Omega_{\Phi}$ has pure discrete spectrum then $GR(\Phi)=\cup_{k\in\Z}\Lambda^k\mathcal{R}$.
\end{cor}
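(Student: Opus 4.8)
The plan is to deduce this from Proposition \ref{eigenvalues}: it suffices to verify that, under the two stated hypotheses, the product of the nonzero eigenvalues of $f_*:H_1(X;\Q)\to H_1(X;\Q)$ is $\pm1$. The strategy is to identify this collection of eigenvalues with the eigenvalues of the torus map $F$, whose determinant is $\pm1$ precisely because the eigenvalues of $\Lambda$ are units.

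First I would pass to the \v{C}ech cohomology of the inverse limit. Since $\Omega_{\Phi}$ is homeomorphic to $\il f$ with the shift $\hat f$ conjugate to $\Phi$, continuity of \v{C}ech cohomology gives $\check{H}^1(\Omega_{\Phi};\Q)\cong\dl(H^1(X;\Q),f^*)$, and $\Phi$ induces on this direct limit the automorphism determined by $f^*$. The dimension of the direct limit of a fixed finite-dimensional space under a single endomorphism equals the number of nonzero eigenvalues (with multiplicity) of that endomorphism, since the limit is the quotient by the generalized $0$-eigenspace; thus $\dim\check{H}^1(\Omega_{\Phi};\Q)$ is the number of nonzero eigenvalues of $f^*$. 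Over the field $\Q$ the map $f^*$ is dual to $f_*$, so the two share the same eigenvalues with the same multiplicities. Hence the hypothesis $\dim\check{H}^1(\Omega_{\Phi};\Q)=md$ says exactly that $f_*$ has precisely $md$ nonzero eigenvalues, counted with multiplicity.

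Next I would bring in the maximal equicontinuous factor $g:\Omega_{\Phi}\to\hat{\T}^{md}$. Because the eigenvalues of $\Lambda$ are algebraic units, $F$ is an automorphism and $\hat{\T}^{md}$ is the ordinary torus $\T^{md}$, so $\check{H}^1(\hat{\T}^{md};\Q)=\Q^{md}$ and $\hat{F}^*$ acts with the $md$ eigenvalues of the integer matrix $F$, whose product is $\det F=\pm1$. The relation $g\circ\Phi=\hat{F}\circ g$ shows that $g^*:\check{H}^1(\hat{\T}^{md};\Q)\to\check{H}^1(\Omega_{\Phi};\Q)$ intertwines $\hat{F}^*$ with the shift-induced automorphism described above. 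The essential point is that $g^*$ is injective: this holds because $g$ is the maximal equicontinuous factor and the classes pulled back from $\T^{md}$ are recorded by the (distinct) continuous eigenvalues of the $\R^n$-action, so no nontrivial rational combination of them becomes trivial in $\check{H}^1(\Omega_{\Phi};\Q)$. Granting injectivity and the equality of dimensions, $g^*$ is an isomorphism, so the shift-induced automorphism of $\check{H}^1(\Omega_{\Phi};\Q)$ is conjugate to $\hat{F}^*$ and has the same spectrum. Therefore the nonzero eigenvalues of $f^*$, equivalently of $f_*$, are exactly the eigenvalues of $F$, and their product is $\det F=\pm1$. Proposition \ref{eigenvalues} then gives $GR(\Phi)=\cup_{k\in\Z}\Lambda^k\mathcal{R}$.

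The step I expect to be the main obstacle is the injectivity of $g^*$ on rational \v{C}ech cohomology. Everything else is formal (continuity of cohomology, the direct-limit dimension count, duality between $f^*$ and $f_*$, and $\det F=\pm1$ from the unit hypothesis), but injectivity is where the specific nature of the maximal equicontinuous factor of a minimal Pisot family tiling space must be invoked; I would extract it from the analysis of $g$ in \cite{BKe}, identifying the image of $g^*$ with the portion of $\check{H}^1(\Omega_{\Phi};\Q)$ detected by continuous eigenvalues. An alternative that sidesteps citing injectivity directly is to realize $g$ by a cellular map $X\to\T^{md}$ intertwining $f$ and $F$, show that its pullback carries $\Q^{md}$ injectively into the eventual image of $f^*$, and conclude from the dimension count that this pullback is \emph{onto} the eventual image; the crux is the same, namely that the $md$ pulled-back classes are rationally independent in $H^1(X;\Q)$.
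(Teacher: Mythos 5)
Your proposal is correct and follows essentially the same route as the paper: reduce to Proposition \ref{eigenvalues}, identify $\check{H}^1(\Omega_{\Phi};\Q)$ with the eventual range of $f^*$ (of dimension $md$ by hypothesis), and use injectivity of $g^*$ together with the dimension count to conclude that the nonzero eigenvalues of $f_*$ coincide with those of $F$, whose product is $\pm1$. The only differences are cosmetic: the paper cites \cite{BKS} (not \cite{BKe}) for the injectivity of $g^*$, and it deduces the product $\pm1$ from the identification (also from \cite{BKS}) of the eigenvalues of $F^*$ with the eigenvalues of $\Lambda$ and their algebraic conjugates, all units, rather than directly from $\det F=\pm1$ as you do.
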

\begin{proof} Since $\il f:X\to X$ is homeomorphic with $\Omega_{\Phi}$, $\check{H}^1(\Omega_{\Phi};\Q)$ is isomorphic with $\dl f^*:H^1(X;\Q)\to H^1(X;\Q)$. This direct limit is isomorphic with the eventual range of $f^*$ and hence has dimension $md$. It is proved in \cite{BKS} that $g^*:\check{H}^1(\mathbb{T}^{md};\Q)\to\check{H}^1(\Omega_{\Phi};\Q)$  is an injection and that the eigenvalues of $F^*:\check{H}^1(\mathbb{T}^{md};\Q)\to\check{H}^1(\mathbb{T}^{md};\Q)$ are precisely the eigenvalues of $\Lambda$, together with all their algebraic conjugates, all of multiplicity $m$ ($\hat{\T}^{md}=\T^{md}$ and $\hat{F}=F$ since the eigenvalues of $\Lambda$ are units). Thus $g^*$ is surjective as well and $\hat{f}^*:\check{H}^1(\il f;\Q)\to \check{H}^1(\il f;\Q)$, being conjugate with $\Phi^*$ and hence with $F^*$, has those same eigenvalues. Since $\hat{f}^*$ is conjugate with $f^*$ restricted to its eventual range, and the latter is (by the Universal Coefficient Theorem) just the transpose (dual) of $f_*:H^1(X;\Q)\to H^1(X;\Q)$, restricted to its eventual range, we conclude that the eigenvalues of $f_*$ are precisely those of $\Lambda$, together with their algebraic conjugates, all of multiplicity $m$. As these eigenvalues are assumed to be units, the product of the nonzero eigenvalues of $f_*$ is $\pm1$ and Proposition \ref{eigenvalues} applies.
\end{proof}

\section{Applications}\label{apps}

Given a substitution $\Phi$ and tilings $S$ and $S'$ by prototiles for $\Phi$, an {\em overlap} for $S$ and $S'$ is a pair of tiles $\{\tau,\tau'\}$ with $\tau\in S$, $\tau'\in S'$,
and $\mathring{\tau}\cap\mathring{\tau}'\ne\emptyset$. The overlap $\{\tau,\tau'\}$ {\em leads to coincidence} if there is $x\in\mathring{\tau}\cap\mathring{\tau}'$ so that $S$ and $S'$ are eventually coincident at $x$. It is clear that if all pairs $\{\tau,\tau'\}$ that occur as overlaps for $\Phi^k(S)$
and $\Phi^k(S')$, $k\in\N$, lead to coincidence, then $S$ and $S'$ are densely eventually coincident. Pisot family substitution tilings have the Meyer property (see \cite{LS}) from which it follows that if $Q$ is an admissible patch for $\Phi$ that tiles periodically and $v\in GR(\Phi)\cup\cup_{k\in\Z}\Lambda^k\mathcal{R}$, then the collection of all translation equivalence classes of
overlaps for $\Phi^k(\bar{Q}),\Phi^k(\bar{Q}-v)$, $k\in\N$, is finite. In the examples of this section we implement Theorem \ref{main theorem} by finding appropriate $Q$ and $v$ so that each of the finitely many types of overlap that occur for $\Phi^k(\bar{Q}),\Phi^k(\bar{Q}-v)$, $k\in\N$, leads to coincidence.

\begin{ex} ({\bf Octagonal substitution})

The (undecorated) octagonal substitution is a 2-dimensional Pisot family substitution with inflation $\Lambda=(1+\sqrt{2})I$. There are 20 prototiles: four unmarked rhombi, $\rho_i=r^i(\rho_0)$, $i=0,\ldots3$, with $r^4(\rho_0)=\rho_0$, $r$ being rotation through $\pi/4$, and sixteen marked isosceles right triangles $\tau_i=r^i(\tau_0),\tau'_i=r^i(\tau'_0)$,
$i=0,\ldots,7$, with $\tau_i$ and $\tau'_i$ having the same support but bearing different marks. Let $\rho_0$ have vertices $(0,0),(1,0),(\sqrt{2}/2,\sqrt{2}/2)$, and $(1+\sqrt{2}/2,\sqrt{2}/2)$, and let $\tau_0$ and $\tau'_0$ have vertices $(0,0),(1,0)$, and $(1,1)$. The octagonal substitution, $\Phi_O$, is described in the following figure.
\begin{figure}[h]
\epsfysize=1.3truein\epsfbox{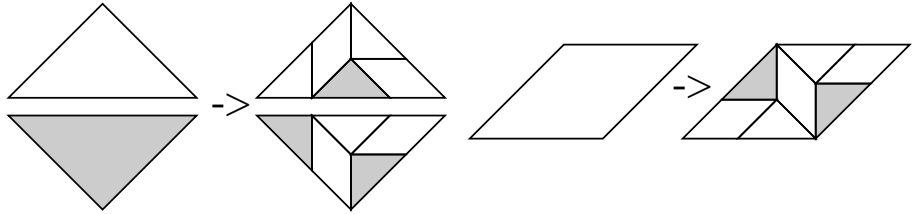}
\caption{Octagon subdivision rule. The large shaded and unshaded triangles are $\tau'_7$ and $\tau_3$, resp.}
\end{figure}
Let $Q$ be the unit square patch $Q=\{\tau_0,\tau'_4+(1,1)\}$ and let $v=(1-\sqrt{2}/2,\sqrt{2}/2)$.
Then $v$ is completely rationally independent of $\{(1,0),(0,1)\}$ and the following sequence of pictures proves that $\bar{Q}$ and $\bar{Q}-v$ are densely eventually coincident.
\begin{figure}[h]
\epsfysize=3.25truein\epsfbox{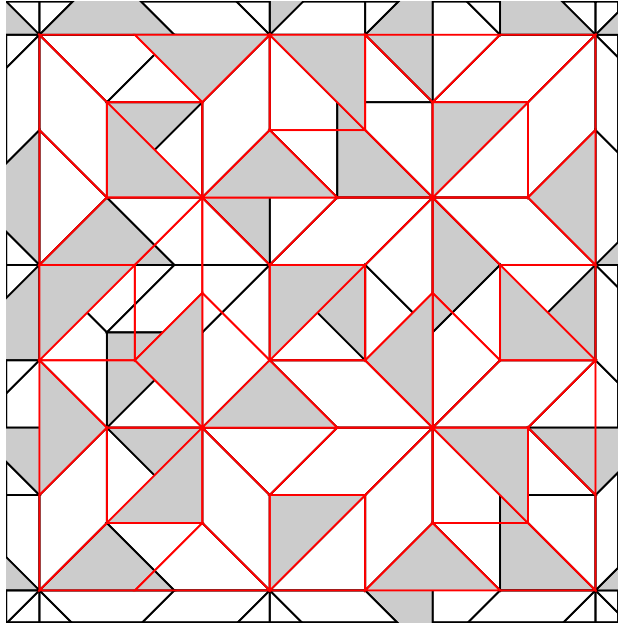}
\caption{Red overlay of the patch $\Phi_O^2(Q)$ with background $\Phi_O^2(\bar{Q}-v)$.}
\end{figure}
Indeed, we see
that each overlap that occurs for $\Phi_O^3(\bar{Q}),\Phi_O^3(\bar{Q}-v)$ already occurs (up to translation) for $\Phi_O^2(\bar{Q}),\Phi_O^2(\bar{Q}-v)$ and leads to coincidence in $\Phi_O^4(\bar{Q}),\Phi_O^4(\bar{Q}-v)$. Thus the $\R^2$-action on $\Omega_{\Phi_O}$ has pure discrete spectrum. (This conclusion is well known: octagonal tilings can be obtained by a cut-and-project method with canonical window and such a provenance always guarantees pure discrete spectrum - see, for example, \cite{BM}.)
\begin{figure}[h]
\epsfysize=3.25truein\epsfbox{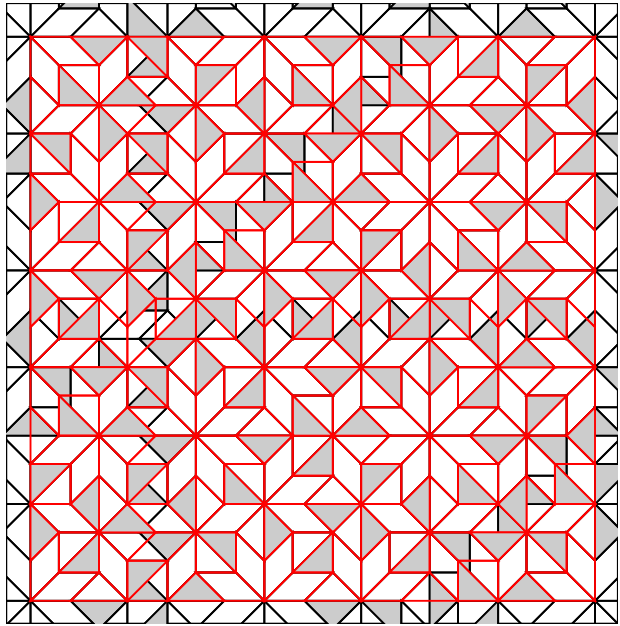}
\caption{$\Phi_O^3(Q)$ and $\Phi_O^3(\bar{Q}-v)$.}
\end{figure}
\begin{figure}[h]
\epsfysize=3.25truein\epsfbox{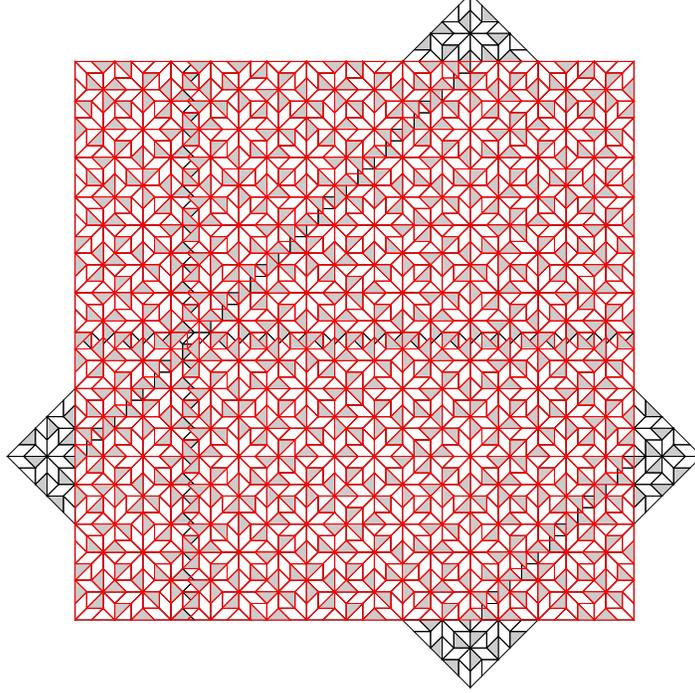}
\caption{$\Phi_O^4(Q)$ and $\Phi_O^4(\bar{Q}-v)$.}
\end{figure}
\end{ex}

\newpage
\begin{ex} ({\bf Arnoux-Rauzy substitutions})
\end{ex}

 V. Berth\'{e}, T. Jolivet, and A. Siegel have obtained the result of this subsection by different methods (\cite{BJS}).

We will say that words $w$ and $w'$ in the alphabet $\mathcal{A}=\{1,\ldots,d\}$ are {\em cyclically equivalent} if there are non-degenerate words $u,v$ with $w=uv$ and $w'=vu$. Such a cyclic equivalence is {\em non-parallel} if the abelianizations $[u]$ and $[v]$ are non-parallel vectors.
For each $i\in\mathcal{A}$, let $\sigma_i:\mathcal{A}\to\mathcal{A}^*$ be the substitution defined by $\sigma_i(j)=ji$, for $j\ne i$, and $\sigma_i(i)=i$.

\begin{lem}\label{lem:one-two} Suppose that $u^i\in\mathcal{A}^*$, $i = 1, \dots , d$, are cyclically equivalent,
$k \in \{ 2, \dots , d \}$ and $j \in \{ 1, \dots , k \}$. Then
$\sigma_j(u^1) = v^1, \dots , \sigma_j(u^k) = v^k$,
$\sigma_j((k+1)u^{k+1}) = (k+1)v^{k+1}, \dots , \sigma_j(du^d) = dv^d$, with $v^i$, $i = 1, \dots , d$,
cyclically equivalent.
\end{lem}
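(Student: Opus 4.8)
The plan is to reduce the lemma to the single fact that a free-monoid morphism preserves cyclic equivalence, after absorbing the asymmetric prefix convention of the statement into a uniform notation. First I would introduce, for each $i$, the \emph{completed} word $\hat u^i$, equal to $u^i$ when $i\le k$ and to $i\,u^i$ when $i>k$, and similarly $\hat v^i$, equal to $v^i$ when $i\le k$ and to $i\,v^i$ when $i>k$. With this notation the two displayed lines of the lemma collapse into the single identity $\hat v^i=\sigma_j(\hat u^i)$, valid for all $i=1,\dots,d$: for $i\le k$ it is the equation $v^i=\sigma_j(u^i)$, and for $i>k$ it is exactly $\sigma_j(i\,u^i)=i\,v^i$. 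The hypothesis ``$u^1,\dots,u^d$ are cyclically equivalent'' I would read as the assertion that $\hat u^1,\dots,\hat u^d$ are pairwise cyclically equivalent, each a cyclic rotation of one common word $M$ anchored so that $\hat u^i$ begins with the letter $i$; this is the only reading consistent with the conclusion, since the literal $u^i$ have unequal lengths as soon as $k<d$. The conclusion is then the same statement for $\hat v^1,\dots,\hat v^d$.

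With the reformulation in hand the core step is immediate. Since $\sigma_j\colon\mathcal{A}^*\to\mathcal{A}^*$ is a monoid homomorphism it sends cyclically equivalent words to cyclically equivalent words: if $\hat u^i=pq$ and $\hat u^{i'}=qp$ with $p,q$ non-degenerate, then $\hat v^i=\sigma_j(p)\sigma_j(q)$ and $\hat v^{i'}=\sigma_j(q)\sigma_j(p)$, so $\hat v^i$ and $\hat v^{i'}$ are cyclically equivalent with cut after $\sigma_j(p)$ (and $\sigma_j(p),\sigma_j(q)$ are non-degenerate because $p,q$ are). Running this over all pairs $i,i'$ shows $\hat v^1,\dots,\hat v^d$ are pairwise cyclically equivalent, all being rotations of $\sigma_j(M)$, which is exactly what must be proved.

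The real work, and where I expect to spend essentially all the care, is the bookkeeping that makes the reformulation legitimate: that the completed/stripped convention is respected on the output side, and that the anchoring by first letter is preserved. Concretely I must verify that $\sigma_j(\hat u^i)$ again begins with the letter $i$, so that for $i>k$ the word $v^i$ really is $\hat v^i$ with its leading $i$ removed; this holds because $\sigma_j(i)$ begins with $i$ in both regimes, namely $\sigma_j(i)=ij$ when $i\ne j$ and $\sigma_j(j)=j$ when $i=j$. The hypothesis $j\le k$ enters precisely here: it places the distinguished letter $j$, whose image $\sigma_j(j)=j$ is a single letter rather than of the form $mj$, among the indices $\le k$ that are kept completed, so that the uniform identity $\hat v^i=\sigma_j(\hat u^i)$ holds across all $i$ and every index $i>k$ satisfies $i\ne j$; were $j>k$, the equation $\sigma_j(j\,u^j)=j\,v^j$ would force $v^j=\sigma_j(u^j)$ instead of $j\,\sigma_j(u^j)$ and break the pattern. (The bound $k\ge 2$ serves only to guarantee at least two completed words, enough to anchor the common necklace.) Once this is checked, the conjugacy-preservation step above completes the proof with no further computation; preservation of the first-letter anchoring also shows the output family inherits the very same completed structure, so that the lemma can be iterated.
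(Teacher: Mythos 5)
You have correctly sensed that the literal statement cannot hold with exact equalities (a length count shows that, read literally, $v^l$ for $l>k$ would be one letter longer than $v^i$ for $i\le k$), but you resolved the discrepancy in the wrong direction, and this turns the lemma into a different, essentially trivial statement. The paper's intent --- clear from its own proof and from how the lemma is iterated in Proposition \ref{pro:cyc-equiv} --- is that the hypothesis and conclusion are exactly as written: the words $u^1,\dots,u^d$ themselves (hence all of the same length) are cyclically equivalent, and the words $v^1,\dots,v^d$ themselves are cyclically equivalent; it is the displayed equations for $l>k$ that are to be read with a trailing letter suppressed, namely $\sigma_j(lu^l)=lv^lj$. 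Your reading, in which the ``completed'' words $\hat u^i$ are the cyclically equivalent ones, is incompatible with the application: after the first step of Proposition \ref{pro:cyc-equiv} the cyclically equivalent words are $12,\,21,\,21,\dots,21$ (the bold prefixes, all of length $2$), while your $\hat u^i$ would be $12,\,21,\,321,\,421,\dots$, which have different lengths and so cannot be cyclically equivalent. Thus the hypothesis you argue from is unavailable precisely where the lemma is invoked, and the conclusion you prove (cyclic equivalence of the $\hat v^i$) is not the one the induction needs.

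Consequently your core step --- a morphism sends $pq,\,qp$ to $\sigma_j(p)\sigma_j(q),\,\sigma_j(q)\sigma_j(p)$ --- proves only that $\sigma_j(u^1),\dots,\sigma_j(u^d)$ are cyclically equivalent, which is where the paper's proof starts, not where it ends. The actual content of the lemma is the treatment of the prefix letter $l>k$: since $j\le k$ one has $l\ne j$, so $\sigma_j(l)=lj$; and since the $\sigma_j$-image of every nonempty word ends in $j$, one may write $\sigma_j(u^l)=w^lj$; then
$\sigma_j(lu^l)=lj\,\sigma_j(u^l)=ljw^lj=l\,(jw^l)\,j$,
so the word following the initial $l$ is $v^l=jw^l$, a cyclic rotation of $w^lj=\sigma_j(u^l)$ --- the trailing $j$ has been cycled around to the front. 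It is exactly this rotation that makes $v^l$ cyclically equivalent to the $v^i=\sigma_j(u^i)$, $i\le k$, and that preserves the structure ``letter $l$ followed by a word cyclically equivalent to the others'' needed for the next application of Lemma \ref{lem:one-two} or Lemma \ref{lem:three}. That step has no counterpart in your proposal, so the proposal does not prove the lemma.
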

\begin{proof} Let $k \in \{ 2, \dots , d \}$, $j \in \{ 1, \dots , k \}$ and $l \in \{ k+1, \dots , d \}$.
Then $\sigma_j(lu^l) = lj\sigma_j(u^l) = lj(w^lj) = l(jw^l)j$. Therefore, $v^i$, $i \in \{ 1, \dots , k \}$,
and $w^lj$, $l = k+1, \dots , d$, are cyclically equivalent, and hence $v^i$, $i \in \{ 1, \dots , k \}$,
and $v^l = jw^l$, $l = k+1, \dots , d$, are cyclically equivalent.
\end{proof}

\begin{lem}\label{lem:three} Suppose that $u^i$, $i = 1, \dots , d$, are cyclically equivalent and
$k \in \{ 3, \dots , d \}$. Then $\sigma_k(u^1) = v^1, \dots , \sigma_k(u^{k-1}) = v^{k-1}$,
$\sigma_k(ku^k) = v^k$
$\sigma_k((k+1)u^{k+1}) = (k+1)v^{k+1}, \dots , \sigma_k(du^d) = dv^d$, with $v^i$, $i = 1, \dots , d$,
cyclically equivalent.
\end{lem}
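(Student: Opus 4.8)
The plan is to prove Lemma \ref{lem:three} by imitating the proof of Lemma \ref{lem:one-two}, the only genuinely new feature being the treatment of the \emph{pivot} word at position $k$, where the substitution letter coincides with the inserted prefix. First I would record the two structural facts that drive both lemmas. The first is that $\sigma_k$ preserves cyclic equivalence: if $u=pq$ and $u'=qp$ with $p,q$ nondegenerate, then $\sigma_k(u)=\sigma_k(p)\sigma_k(q)$ and $\sigma_k(u')=\sigma_k(q)\sigma_k(p)$, so $\sigma_k(u)$ and $\sigma_k(u')$ are again cyclically equivalent; hence from the hypothesis that $u^1,\dots,u^d$ are cyclically equivalent it follows at once that $\sigma_k(u^1),\dots,\sigma_k(u^d)$ are cyclically equivalent. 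The second is that every nonempty word in the image of $\sigma_k$ ends in the letter $k$, since the last letter $a$ of any $u^i$ satisfies $\sigma_k(a)=ak$ when $a\ne k$ and $\sigma_k(a)=k$ when $a=k$. I would accordingly write $\sigma_k(u^i)=w^i k$ for each $i$.

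Next I would compute the three cases of the substitution and read off the $v^i$, mirroring the computation $\sigma_j(lu^l)=l(jw^l)j$ in Lemma \ref{lem:one-two}. For $i<k$ there is nothing to do: $v^i:=\sigma_k(u^i)=w^i k$. For a post-pivot index $l\in\{k+1,\dots,d\}$ one has $l\ne k$, so $\sigma_k(l)=lk$ and $\sigma_k(lu^l)=lk\,\sigma_k(u^l)=l(kw^l)k$; as in Lemma \ref{lem:one-two} I set $v^l:=kw^l$, the cyclic rotation of $\sigma_k(u^l)$ that moves the terminal $k$ to the front, so that the prefix $l$ is reproduced and a single terminal $k$ is left over, under the same terminal-letter convention used there.

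The pivot case $i=k$ is the genuinely new point, and it is where I expect the main difficulty to lie. Because $\sigma_k$ \emph{fixes} the letter $k$, one gets $\sigma_k(ku^k)=k\,\sigma_k(u^k)=(kw^k)k$, so setting $v^k:=kw^k$ the leading $k$ is absorbed into $v^k$ rather than surviving as a separate prefix. This is precisely the mechanism that demotes position $k$ from prefixed to unprefixed, turning a configuration in which positions $k,\dots,d$ carry prefixes into one in which only positions $k+1,\dots,d$ do; it is the sole structural difference from the situation of Lemma \ref{lem:one-two}, where the substitution letter $j$ was chosen $\le k$ and so never collided with a prefix. The care required is in applying the leading-$k$ versus terminal-$k$ conventions consistently across the three cases, so that the surviving prefixes are exactly $k+1,\dots,d$ and each $v^i$ is a genuine (nondegenerate) rotation within the common necklace.

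Finally I would assemble the conclusion. By construction each $v^i$ is a cyclic rotation of $\sigma_k(u^i)$ — the trivial rotation for $i<k$, and the rotation sending the terminal $k$ to the front for $i\ge k$ — and the words $\sigma_k(u^1),\dots,\sigma_k(u^d)$ are pairwise cyclically equivalent by the first structural fact. Since a cyclic rotation of a word remains in the same cyclic class, the family $v^1,\dots,v^d$ is cyclically equivalent, as claimed. I note that the range hypothesis $k\in\{3,\dots,d\}$ merely fixes the regime of the companion induction to which these two lemmas feed, namely the promotion of a configuration of ``type $k-1$'' to one of ``type $k$'', the smaller cases being supplied by Lemma \ref{lem:one-two} and the base of that induction; the computation above is otherwise uniform in $k$.
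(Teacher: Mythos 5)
Your proposal is correct and takes essentially the same approach as the paper's proof: the identical computations $\sigma_k(ku^k)=k(w^kk)=(kw^k)k$ and $\sigma_k(lu^l)=lk(w^lk)=l(kw^l)k$, the same choices $v^k:=kw^k$ and $v^l:=kw^l$, and the same concluding observation that these are rotations of the pairwise cyclically equivalent images $\sigma_k(u^i)$. You merely make explicit two facts the paper leaves implicit (that $\sigma_k$ preserves cyclic equivalence since $\sigma_k(pq)=\sigma_k(p)\sigma_k(q)$, and that every nonempty $\sigma_k$-image ends in $k$, whence the trailing-letter convention), which is a fair elaboration rather than a different argument.
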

\begin{proof} Let $k \in \{ 3, \dots , d \}$ and $l \in \{ k+1, \dots , d \}$.
Then $\sigma_k(ku^k) = k\sigma_k(u^k) = k(w^kk) = (kw^k)k$ and
$\sigma_k(lu^l) = lk\sigma_k(u^l) = lk(w^lk) = l(kw^l)k$. Therefore, $v^i$, $i \in \{ 1, \dots , k-1 \}$,
$w^kk$ and $w^lk$, $l = k+1, \dots , d$, are cyclically equivalent, and hence
$v^i$, $i \in \{ 1, \dots , k-1 \}$, $v^k = kw^k$ and $v^l = kw^l$, $l = k+1, \dots , d$, are cyclically equivalent.
\end{proof}

Given a word $w=w_1w_2\cdots w_k\in\mathcal{A}^*$, let $\sigma_w:=\sigma_{w_1}\circ\cdots\circ\sigma_{w_k}$. A substitution $\phi:\mathcal{A}\to\mathcal{A}^*$ is {\em Arnoux-Rauzy} if $\phi=\sigma_w$ for some $w$ with $[w]$ strictly positive.

\begin{pro}\label{pro:cyc-equiv} If $\phi = \sigma_w$ is Arnoux-Rauzy on $d$ letters, then
there are prefixes  $P_i$ of the fixed words $\phi^{\infty}(i) = P_i \dots$, $i = 1, \dots , d$ so that there is a non-parallel cyclic equivalence  between $P_i$ and $P_j$ for $i\ne j\in\{1, \dots , d\}$.
\end{pro}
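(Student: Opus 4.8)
The plan is to reduce the statement to exhibiting a single finite word $C$ over $\mathcal{A}$ that contains every letter and whose $d$ cyclic rotations beginning with $1,\dots,d$ are simultaneously initial segments of $\phi^{\infty}(1),\dots,\phi^{\infty}(d)$; the prefix $P_i$ is then taken to be the rotation of $C$ beginning at an appropriate occurrence of $i$. Because $P_1,\dots,P_d$ are rotations of one common word they are automatically pairwise cyclically equivalent, so only two things remain: that such a $C$ exists with each rotation correctly aligned inside the corresponding fixed word, and that the cyclic equivalences so obtained can be taken non-parallel. Writing $w=w_1\cdots w_k$, the natural candidate (confirmed on examples) is $C=\phi(w_k)$: since $\phi(w_k)$ begins with $w_k$ and $\sigma_{w_k}(w_k)=w_k$, one has $\phi(i)=\sigma_{w_1\cdots w_{k-1}}(i)\,C$ for $i\neq w_k$, and the length-$|C|$ initial segment of $\phi(i)$ turns out to be exactly the rotation of $C$ beginning at a distinguished occurrence of $i$.

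To establish the alignment in general I would build $C=\sigma_{w_1}\cdots\sigma_{w_{k-1}}(w_k)$ by applying the single-letter substitutions $\sigma_{w_{k-1}},\dots,\sigma_{w_1}$ one at a time to $w_k$, tracking the whole family of rotations through this build-up. This is precisely what Lemmas \ref{lem:one-two} and \ref{lem:three} do: from a family $u^1,\dots,u^d$ of cyclically equivalent words, one application of $\sigma_j$ yields a new cyclically equivalent family $v^1,\dots,v^d$, and the threshold $k$ together with the prepended letters $k+1,\dots,d$ records exactly which rotation inherits which leading letter --- Lemma \ref{lem:one-two} handling an interior letter $j\le k$ and Lemma \ref{lem:three} the boundary letter $j=k$. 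The induction carries the invariant that, after processing part of $w$ (and after relabelling the alphabet by the order in which letters are first introduced), the $i$-th member of the current family begins with $i$ for all $i\le k$, and each step either keeps $k$ fixed or advances it when $\sigma_{w_m}$ introduces a new leading letter. Running through all of $w$ produces the family for $\phi$, and the tracking guarantees that the rotation beginning with $i$ is an initial segment of $\phi(i)$, hence of $\phi^{\infty}(i)$, which begins with $\phi(i)$.

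Two points require care. First is the leading-letter bookkeeping: one must show the threshold reaches $d$, i.e. that every letter of $\mathcal{A}$ eventually appears as the leading letter of exactly one family member. This is where the Arnoux-Rauzy hypothesis enters, since $[w]$ strictly positive forces every $\sigma_i$ to occur in the composition defining $\phi$, so every letter is introduced during the build-up. The second and, I expect, the real obstacle is non-parallelism, which the lemmas do not address --- they guarantee only cyclic equivalence. Here I would carry abelianizations through the construction and verify that, for $i\neq j$, one of the two arcs of $C$ joining the chosen occurrences of $i$ and $j$ has abelianization not proportional to $[C]$. Since $P_i$ and $P_j$ begin with distinct letters, the rotation relating them is never by a multiple of the primitive period of $C$, which rules out the obvious way the two halves could share a common abelianization direction; ruling out an accidental proportionality of a shorter sub-arc is the remaining point, and it is the only genuinely new work beyond the two lemmas.
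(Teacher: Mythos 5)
Your construction of the cyclically equivalent prefixes is essentially the paper's own argument: after renumbering the alphabet so that letters are introduced in order (the paper normalizes $w = YdX_{d-2}(d-1)\cdots 3X_12 1^n$ with $X_j$ containing only letters $1,\dots,j+1$), one computes the base family from $\sigma_{21^n}$ and then pushes it through the remaining elementary substitutions using Lemma \ref{lem:one-two} (for letters at or below the current threshold) and Lemma \ref{lem:three} (to advance the threshold), exactly the bookkeeping you describe. Your candidate $C=\phi(w_k)$ also agrees with the paper: since the family member attached to the last letter of $w$ starts as a \emph{full} image and stays a full image under every application of the lemmas, the common word is $P_1=\phi(1)$ (in the renumbered alphabet), and each $P_i$ is the length-$|P_1|$ prefix of $\phi(i)$. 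So the first half of your plan is sound and coincides with the paper's proof.

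The genuine gap is non-parallelism, precisely where you flagged it, and your partial argument does not close it. Knowing that the rotation relating $P_i$ and $P_j$ is not by a multiple of the primitive period of $C$ does \emph{not} rule out parallel arcs: for the primitive word $C=1221$, the rotation $2112$ begins with a different letter, yet the two arcs $u=12$, $v=21$ satisfy $[u]=[v]$. So ``distinct leading letters'' buys nothing here, and ``carrying abelianizations through the construction'' has no evident inductive invariant that survives the lemmas. The paper's resolution is arithmetic rather than combinatorial, and it is the step your proposal is missing: because $P_1=\phi(1)$ is the full image, every $P_i$ has abelianization $[P_i]=[\phi(1)]=M_{\phi}[1]$; each elementary incidence matrix $M_{\sigma_i}$ has determinant $1$, so $M_{\phi}$ is unimodular and $M_{\phi}^{-1}$ is an integer matrix. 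If some cyclic equivalence $P_i=uv$, $P_j=vu$ had $[v]=t[u]$, then $M_{\phi}[1]=(1+t)[u]$, hence $M_{\phi}^{-1}[u]=\frac{1}{1+t}[1]$, which is impossible since $M_{\phi}^{-1}[u]\in\Z^d$ while $\frac{1}{1+t}[1]$ has a non-integer entry. Note that this argument is the reason it matters that $C$ is the full word $\phi(w_k)$ and not merely some aligned prefix: one needs $[C]=M_{\phi}[w_k]$ to invoke unimodularity.
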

\begin{proof} By symmetry of $\sigma_i$'s we may assume, after renumbering, that $w = YdX_{d-2}d-1 \dots 3X_121^n$, with $X_j$
having only 1's, $\dots$, $(j+1)$'s.

Note that
\begin{eqnarray*}
\sigma_{21^n}(1) &=& {\bf 12}\\
\sigma_{21^n}(2) &=& {\bf 21}212\dots\\
\sigma_{21^n}(3) &=& 3{\bf 21}2\dots\\
               &\vdots&   \\
\sigma_{21^n}(d) &=& d{\bf 21}2\dots .
\end{eqnarray*}
By Lemma \ref{lem:one-two} (for $k = 2$) we have that
\begin{eqnarray*}
\sigma_{X_1}\sigma_{21^n}(1) &=& v^1_1\\
\sigma_{X_1}\sigma_{21^n}(2) &=& v^2_1\\
\sigma_{X_1}\sigma_{21^n}(3) &=& 3v^3_1\dots\\
               &\vdots&   \\
\sigma_{X_1}\sigma_{21^n}(d) &=& dv^d_1\dots .
\end{eqnarray*}
with $v^i_1$, $i = 1, \dots , d$, cyclically equivalent.

By Lemma \ref{lem:three} (for $k = 3$) we have that
\begin{eqnarray*}
\sigma_3\sigma_{X_1}\sigma_{21^n}(1) &=& v^1_2\\
\sigma_3\sigma_{X_1}\sigma_{21^n}(2) &=& v^2_2\\
\sigma_3\sigma_{X_1}\sigma_{21^n}(3) &=& v^3_2\dots\\
\sigma_3\sigma_{X_1}\sigma_{21^n}(4) &=& 4v^4_2\dots\\
               &\vdots&   \\
\sigma_3\sigma_{X_1}\sigma_{21^n}(d) &=& dv^d_2\dots .
\end{eqnarray*}
with $v^i_2$, $i = 1, \dots , d$, cyclically equivalent, etc.

Let $P_i:=\sigma_Y(v^i_{2d})$. Then $P_i$ and $P_j$ are cylically equivalent for $i\ne j$.
If $P_1=uv$ with $u,v$ non-degenerate, then $M_{\phi}[1]=[\phi(1)]=[P_1]=[P_i]=[u]+[v]$.
So if $[v]=t[u]$, then $\frac{1}{1+t}[1]=M^{-1}_{\phi}[u]$. But $M^{-1}_{\phi}[u]$ is in $\Z^d$ and
$\frac{1}{1+t}[1]$ is not. Thus the cyclic equivalences are non-parallel.

\end{proof}

If $\phi$ is an Arnoux-Rauzy substitution on $d$ letters with Pisot inflation factor (that is, the Perron-Frobenius eigenvalue of $\lambda$ of $M_{\phi}$ is Pisot) we'll say that $\phi$ is {\em Pisot Arnoux-Rauzy} and {\em irreducible Pisot Arnoux-Rauzy} if $M_{\phi}$ is irreducible. It is shown in \cite{AI} that all Arnoux-Rauzy substitutions on three letters are irreducible Pisot. If $\phi$ is Pisot Arnoux-Rauzy, the maximal equicontinuous factor for the $\R$-action on the associated 1-dimensional tiling space $\Omega_{\Phi}$ is an $\R$-action on the $m$-torus $\mathbb{T}^m=\R^m$, $m=deg(\lambda)$ (this is because $det(M_{\phi})=1$, so $\lambda$ is a unit). The maximal equicontinuous factor map $g:\Omega_{\Phi}\to\mathbb{T}^m$
semi-conjugates $\Phi$ with a hyperbolic toral automorphism $F:\mathbb{T}^m\to\mathbb{T}^m$.

\begin{lem}\label{same g} If $\phi$ is a Pisot Arnoux-Rauzy substitution, $T$ and $T'$ in $\Omega_{\Phi}$ are tilings that have a common vertex, and $n_k\to\infty$ is such that $\Phi^{n_k}(T)\to S\in\Omega_{\phi}$ and $\Phi^{n_k}(T')\to S'\in\Omega_{\Phi}$, then $g(S)=g(S')$.
\end{lem}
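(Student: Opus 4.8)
The plan is to reduce the statement to the single claim that $S$ and $S'$ agree on a half-line, and then to extract $g(S)=g(S')$ from the Kronecker structure of the factor map together with continuity; I expect that none of the right-hand-word machinery (Proposition~\ref{pro:cyc-equiv}) is actually needed for this particular lemma.

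First I would normalize. Translate $T,T'$ so that the common vertex is at the origin; since $\Lambda^{n_k}0=0$, each $\Phi^{n_k}(T)$ and $\Phi^{n_k}(T')$ then has a vertex at $0$, and (having a vertex at the origin is a closed condition, given finite local complexity) the limits $S$ and $S'$ each have a vertex at $0$ as well. The whole argument will then compare $S$ and $S'$ on the two sides of this shared vertex.

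The crux is the behaviour to the \emph{left} of the origin. Writing $\phi=\sigma_w=\sigma_{w_1}\circ\cdots\circ\sigma_{w_r}$, the outermost map applied is $\sigma_{w_1}$, and since $\sigma_{w_1}(a)$ ends in $w_1$ for every letter $a$, every word $\phi(j)$—and inductively every $\phi^{n}(j)$, using $\phi^{n}(j)=\phi^{n-1}(\phi(j))$, which ends in $\phi^{n-1}(w_1)$—terminates in the single letter $w_1$. Hence the length-$\ell$ suffix of $\phi^{n}(j)$ stabilizes, as $n\to\infty$, to the length-$\ell$ suffix of the unique left-fixed word $\overleftarrow{W}$ determined by the terminal letter $w_1$, and this limit is \emph{independent of $j$}. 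Reading this off geometrically: the tiles just to the left of $0$ in $\Phi^{n_k}(T)$ are governed by the suffix of $\phi^{n_k}(p_{-1})$, where $p_{-1}$ is the letter of $T$ immediately left of $0$; since $|\phi^{n_k}(p_{-1})|\to\infty$ and its suffixes converge to $\overleftarrow{W}$, the limit $S$ reads $\overleftarrow{W}$ on all of $(-\infty,0]$, and likewise $S'$ reads the same $\overleftarrow{W}$. Thus $S$ and $S'$ coincide on the entire left half-line. I expect this step—carefully justifying the stabilization of left-suffixes and that it passes to the limits $S,S'$—to be the main point, and it is exactly where the Arnoux--Rauzy normal form (a single outermost factor $\sigma_{w_1}$, hence a unique terminal letter) is essential; for a general primitive substitution the terminal letters may cycle and the two left rays need not agree.

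Finally I would conclude with the factor map. Because $S$ and $S'$ agree on $(-\infty,0]$, translating by $-t$ with $t\to-\infty$ brings longer and longer common patches to the origin, so $d(S-t,S'-t)\to0$; by uniform continuity of $g$ on the compact space $\Omega_{\Phi}$ this gives $g(S-t)-g(S'-t)\to0$ in $\mathbb{T}^{m}$. On the other hand the Kronecker relation $g(R-t)=g(R)-t$ yields $g(S-t)-g(S'-t)=g(S)-g(S')$ for every $t$, a constant. Comparing the two, $g(S)-g(S')=0$, which is the assertion. (One could instead phrase the conclusion through Corollary~\ref{srp} by noting that the coincidence of left rays makes $S$ and $S'$ regionally proximal, but the direct computation via $g(R-t)=g(R)-t$ is cleaner and uses only the left-ray agreement.)
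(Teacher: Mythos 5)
The core of your argument (the stabilizing left-suffix $\overleftarrow{W}$, and the extraction of $g(S)=g(S')$ from half-line agreement via equivariance plus uniform continuity) is sound, but the normalization step at the very start is a genuine gap, and it is exactly the case that matters. Translating $T$ and $T'$ by the common vertex $t_0$ does \emph{not} preserve the hypothesis: $\Phi^{n_k}(T-t_0)=\Phi^{n_k}(T)-\lambda^{n_k}t_0$, and when $t_0\ne 0$ these unboundedly translated sequences need not converge at all; even after passing to a further subsequence, their limits $\tilde S,\tilde S'$ are in general \emph{different} tilings from the $S,S'$ in the statement. Accordingly, your claim that ``the limits $S$ and $S'$ each have a vertex at $0$'' and then agree on $(-\infty,0]$ is false for the actual $S,S'$ when $t_0\ne 0$: the shared structure produced by the common vertex sits near $\lambda^{n_k}t_0$, which escapes to infinity, so it imposes no constraint on $S,S'$ near the origin --- indeed $S$ and $S'$ need not share a single tile (the lemma only asserts $g(S)=g(S')$). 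This is not a removable convenience in context: where the paper invokes this lemma (in the proof of Lemma \ref{lem:coincidence}), the common vertex of the relevant pair is $v_{m+1}-t_0\ne 0$, so the uncovered case is the one actually used.

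The gap is repairable, but the repair is precisely what is missing: pass to a subsequence along which $\Phi^{n_k}(T-t_0)\to\tilde S$ and $\Phi^{n_k}(T'-t_0)\to\tilde S'$, and let $\tau_k\in\T^m$ be the image of $\lambda^{n_k}t_0$ under the homomorphism $\R\to\T^m$ defining the Kronecker action; then equivariance gives $\tau_k=g(\Phi^{n_k}(T))-g(\Phi^{n_k}(T-t_0))\to g(S)-g(\tilde S)$ and likewise $\tau_k\to g(S')-g(\tilde S')$, so $g(S)-g(S')=g(\tilde S)-g(\tilde S')$, and your half-line argument applied to $\tilde S,\tilde S'$ finishes. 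The paper instead never moves the vertex: from the same combinatorial fact you use (every $\phi(j)$ ends in the same letter) it gets a common tile of $\Phi(T)$ and $\Phi(T')$ adjacent to $\lambda t_0$, hence $T-t\in W^s(T'-t)$ for $t$ near $t_0$ on the appropriate side; applying $g$ and using $g\circ\Phi=F\circ g$ places $g(T)$ and $g(T')$ on the same stable manifold of the hyperbolic automorphism $F$, whence $g(S)=\lim_k F^{n_k}(g(T))=\lim_k F^{n_k}(g(T'))=g(S')$ by hyperbolic contraction, uniformly in the location of the common vertex. You should either adopt that route or add the subsequence-and-cancellation argument above.
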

\begin{proof}
Suppose that  $T$ and $T'$ have a common vertex at $t_0$. Then $\Phi(T)$ and $\Phi(T')$ share a tile
(of type 1) with right vertex at $\lambda t_0$. Then, for $t$ slightly bigger than $t_0$, $T-t$ is in the $\Phi$-stable manifold of
$T'-t$ and $g(T-t)=g(T)-\tilde{t}$ is in the $F$-stable manifold of $g(T'-t)=g(T')-\tilde{t}$. But then
$g(T)$ is in the same $F$-stable manifold as $g(T')$ so that $g(S)=g(\lim_{k\to\infty} \Phi^{n_k}(T))=\lim_{k\to\infty}F^{n_k}(g(T))=\lim_{k\to\infty}F^{n_k}(g(T'))=g(\lim_{k\to\infty} \Phi^{n_k}(T'))=g(S')$.
\end{proof}

Let $cr=cr(\phi)$ be the {\em coincidence rank} of the Pisot substitution $\phi$:
$cr:=max\sharp\{T_1,\ldots,T_m:g(T_i)=g(T_j)\text{ and }T_i\cap T_i=\emptyset \text{ for }i\ne j\}$. The coincidence rank is finite and $g$ is a.e. $cr$-to-1 (see \cite{BK} or \cite{BKe}). Furthermore, if $g(T_i)=g(T_j)\text{ and }T_i\cap T_i=\emptyset$, then $T_i$ and $T_j$ are nowhere eventually coincident (\cite{BKe}).
Let $T_1,\ldots,T_{cr}\in\Omega_{\phi}$ be tilings with $g(T_i)=g(T_j)$ and $T_i$ nowhere eventually coincident with $T_j$ for $i\ne j$, let $V_i$ be the collection of vertices of $T_i$,  let $V:=\cup_{i=1}^{cr}V_i$, and write $V=\{v_n:v_n<v_{n+1}, n\in\Z\}$. Note that if $\phi$ is Arnoux-Rauzy, then $V_i\cap V_j=\emptyset$ for $i\ne j$ (if $T$ and $T'$ share a vertex, then $\Phi(T)$ and $\Phi(T')$ share a tile of type 1). For each $n\in\Z$ and $i\in\{1,\ldots,cr\}$, let $\tau_{n(i)}$ be the tile of $T_i$ whose support meets $(v_n,v_{n+1})$. The pair $C_n:=([v_n,v_{n+1}],\{\tau_{n(1)},\ldots,\tau_{n(cr)}\})$ is called a {\em configuration}. Configurations $C_n$ and $C_m$ {\em have the same type} if there is $t\in\R$ so that $[v_n,v_{n+1}]-t=[v_m,v_{m+1}]$ and $\{\tau_{n(1)}-t,\ldots,\tau_{n(cr)}-t\}= \{\tau_{m(1)},\ldots,\tau_{m(cr)}\}$. By a result of \cite{BKe}, there are only finitely many types of configurations.

\begin{lem}\label{lem:configurations} If $\phi$ is Arnoux-Rauzy, there are $m<n$ so that the configurations $C_n$ and $C_m$ have the same type, but $C_{n+1}$ and $C_{m+1}$ have different types.
\end{lem}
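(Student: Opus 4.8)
The plan is to argue by contradiction. Suppose the conclusion fails, so that for every pair $m<n$ with $C_m$ and $C_n$ of the same type, $C_{m+1}$ and $C_{n+1}$ are also of the same type. Since ``same type'' is an equivalence relation, this says precisely that the map sending the type of $C_n$ to the type of $C_{n+1}$ is well defined on the (finite) set of configuration types that actually occur; call it $\mathrm{succ}$. Thus the bi-infinite sequence of types $t_n:=\mathrm{type}(C_n)$ satisfies $t_{n+1}=\mathrm{succ}(t_n)$.

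First I would observe that $\mathrm{succ}$ is a permutation of the finite set $S:=\{t_n:n\in\Z\}$: it is a self-map of $S$, and it is surjective because every $t_n=\mathrm{succ}(t_{n-1})$ lies in its image, and a surjective self-map of a finite set is bijective. Hence $t_n=\mathrm{succ}^n(t_0)$ is periodic, say $t_{n+p}=t_n$ for all $n$ with $p\ge 1$. Spelling out what ``same type'' means for $C_n$ and $C_{n+p}$, there is for each $n$ a translation amount $t=v_n-v_{n+p}$ carrying one interval and its tile set onto the other; a short computation (using $v_{n+1}+t=v_{n+p+1}$) shows this amount is independent of $n$, so with $s:=v_{n+p}-v_n>0$ we get $v_{n+p}=v_n+s$ and $\{\tau_{(n+p)(1)},\dots,\tau_{(n+p)(cr)}\}=\{\tau_{n(1)},\dots,\tau_{n(cr)}\}+s$ for all $n$. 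Letting $\mathcal{T}:=\bigcup_{i=1}^{cr}T_i$ denote the ensemble of all tiles of all the $T_i$, this exactly says that $\mathcal{T}$ is invariant under translation by $s$.

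The heart of the argument, and the step I expect to be the main obstacle, is to promote this periodicity of the ensemble $\mathcal{T}$ to periodicity of a single tiling $T_i$, which then contradicts aperiodicity of $\Phi$. Here is where the Arnoux-Rauzy hypothesis $V_i\cap V_j=\emptyset$ for $i\ne j$ is essential. Fix $i$ and consider $T_i+s$; each of its tiles lies in $\mathcal{T}+s=\mathcal{T}$, hence belongs to one of the $T_j$. If two adjacent tiles of $T_i+s$ belonged to different tilings $T_j$ and $T_k$, their common endpoint would lie in $V_j\cap V_k=\emptyset$, a contradiction; so, by connectedness, all tiles of $T_i+s$ lie in one and the same $T_j$, and since both tile $\R$ we get $T_i+s=T_{\pi(i)}$ for a well-defined $\pi(i)$. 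As $T\mapsto T+s$ is invertible, $\pi$ is a bijection of $\{1,\dots,cr\}$, whence $T_i+Ns=T_i$ with $N:=\mathrm{ord}(\pi)\ge 1$ and $Ns\ne 0$, contradicting aperiodicity. I would present the verification that $s$ is independent of $n$ and the connectedness step (``all tiles of $T_i+s$ lie in one $T_j$'') carefully, as these are the places where the combinatorial bookkeeping could hide an error.
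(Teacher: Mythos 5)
Your proof is correct and follows essentially the same route as the paper's: negate the conclusion to get a globally periodic sequence of configuration types, deduce that translation by the resulting vector $s$ preserves the ensemble $\bigcup_i T_i$, use the Arnoux--Rauzy property $V_i\cap V_j=\emptyset$ to see that this translation permutes the individual tilings $T_i$, and conclude that some $T_i$ is periodic, contradicting aperiodicity. The paper compresses this into a pigeonhole argument on the vertex-ownership function $i(n)$, and your write-up simply makes explicit the steps (well-definedness of the successor map, constancy of $s$, the adjacency/connectedness argument for the permutation $\pi$) that the paper leaves to the reader.
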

\begin{proof}
Otherwise, there is $L>0$ so that $C_n$ and $C_{n+L}$ have the same type for all $n\in\Z$.
For $n\in\Z$ let $i(n)\in\{1,\ldots.cr\}$ be (uniquely) defined by: $v_n$ is a vertex of a tile in $T_{i(n)}$. There are then $n\in\Z$ and $k\in\N$ so that $i(n)=i(n+kL)$. But then $i(n+jkL)=i(n)$ for all $j\in\Z$ and the tiling $T_{i(n)}$ is periodic.
\end{proof}

For an Arnoux-Rauzy substitution $\phi$ on $d$ letters, let $S_i\in\Omega_{\Phi}$ denote the tiling fixed by $\Phi$ containing a tile of type $i$ with the origin at its left endpoint.

\begin{lem}\label{lem:coincidence} If $\phi$ is a Pisot Arnoux-Rauzy substitution, then there are $i\ne j$ so that the fixed tilings $S_i,S_j\in\Omega_{\Phi}$ are densely eventually coincident.
\end{lem}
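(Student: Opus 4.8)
The plan is to show that two of the $\Phi$-fixed tilings already agree on an entire left half-line, and then to promote this one-sided coincidence to dense eventual coincidence using the independence supplied by Proposition \ref{pro:cyc-equiv}.

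First I would record that the fixed tilings $S_1,\dots,S_d$ all carry the same image under the maximal equicontinuous factor and in fact agree to the left of the origin. Each $S_i$ has the origin as a vertex (the left endpoint of its type-$i$ tile), so any two $S_i,S_j$ share the origin as a common vertex and Lemma \ref{same g} gives $g(S_i)=g(S_j)$. Writing $\phi=\sigma_w$, every image word $\phi(j)$ ends in the letter $w_1$ (the outermost $\sigma_{w_1}$ forces every image to end in $w_1$), so the tile immediately to the left of the origin in $\Phi(S_i)$ and in $\Phi(S_j)$ is the same $w_1$-tile $[-\omega_{w_1},0]$. Since $\Phi(S_i)=S_i$ and $\Phi(S_j)=S_j$, that $w_1$-tile is common to $S_i$ and $S_j$; applying $\Phi^n$ to it produces common patches filling $[-\lambda^n\omega_{w_1},0]$, and letting $n\to\infty$ shows that $S_i$ and $S_j$ are identical on $(-\infty,0)$. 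In particular every pair $S_i,S_j$ is coincident at each point of the left half-line.

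Next I would fix the pair and the independent direction. By Proposition \ref{pro:cyc-equiv} there are $i\ne j$ and a non-parallel cyclic equivalence $P_i=uv$, $P_j=vu$ of the initial prefixes, so that $S_i$ reads $uv\cdots$ and $S_j$ reads $vu\cdots$ to the right of the origin, and the two words have the same length $\ell=\langle[u]+[v],\omega\rangle$. As in the proof of Corollary \ref{(ab,ba)} (via the relation between $\omega$ and the incidence matrix), non-parallelism of $[u]$ and $[v]$ forces $\ell_u:=\langle[u],\omega\rangle$ to be completely rationally independent of $\ell$. This is exactly the data to which the spreading mechanism of Theorem \ref{main theorem} applies: eventual coincidence at a point is open, translation-equivariant, and depends only on local structure.

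The heart of the argument, and the step I expect to be hardest, is to convert the one-sided coincidence of $S_i$ and $S_j$ into coincidence at a dense set of points of the positive half-line. I would run the Baire-category scheme in the proof of Theorem \ref{main theorem}: starting from the open set of coincidence points in $(-\infty,0)$, use the independent translation $\ell_u$ (whose orbit is dense modulo $\ell$) together with openness and equivariance to manufacture, on a nested sequence of dense open sets, coincidence at points whose $\Lambda$-forward orbit meets the known coincidence region. The obstruction, relative to Theorem \ref{main theorem}, is that $S_i,S_j$ are not periodic, so density cannot simply be read off from minimality of a single torus rotation on a fixed fundamental domain; instead I would feed in minimality of the $\R$-action on $\Omega_{\Phi}$ together with finiteness of the overlap types between $\Phi^n(\bar{Q})$ and $\Phi^n(\bar{Q}-\ell_u)$, where $Q$ is the patch with word $uv$. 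Here the self-reproducing structure of the cyclic equivalence under the generators (Lemmas \ref{lem:one-two} and \ref{lem:three}) is essential: it guarantees that the finitely many overlap types arising under iteration are again of balanced, cyclically equivalent form, so that each is eventually carried onto the common $w_1$-tile and hence leads to coincidence. Once a dense set of coincidence points is obtained, $S_i$ and $S_j$ are densely eventually coincident, which is the assertion.
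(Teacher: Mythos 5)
Your opening observation is correct and clean: since every image word $\phi(a)$ ends in the letter $w_1$, each fixed tiling has the tile $[-\omega_{w_1},0]$ of type $w_1$ ending at the origin, and applying $\Phi^n$ to this common tile shows that all the fixed tilings $S_1,\dots,S_d$ agree on $(-\infty,0)$. But this observation carries essentially no force toward the conclusion, and the step you yourself call the heart of the argument is a genuine gap rather than a hard-but-routine step. Agreement on a half-line does not propagate: for the (Pisot, constant-length) Thue--Morse substitution $a\mapsto ab$, $b\mapsto ba$, squared so that fixed tilings exist, the fixed tilings with seeds $a.a$ and $a.b$ agree on $(-\infty,0)$ yet are nowhere eventually coincident on $(0,\infty)$, since $\phi^{\infty}(a)$ and $\phi^{\infty}(b)$ differ in every letter. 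So any correct proof must inject something beyond openness, equivariance, and a shared half-line. The Baire-category mechanism of Theorem \ref{main theorem} cannot supply it: that argument lives on the torus $\R^n/L$ of a periodic tiling $\bar{Q}$ (the step $y\sim_c y+z\Rightarrow y\sim_c y\oplus z$ is exactly $L$-periodicity), and $S_i,S_j$ are not periodic; moreover, that mechanism only spreads an already existing eventual-coincidence point, and on the positive axis you have produced none --- neither for $(S_i,S_j)$ nor for $(\bar{Q},\bar{Q}-\ell_u)$.

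Your proposed substitute --- Meyer finiteness of overlap types plus the self-reproduction of cyclic equivalence (Lemmas \ref{lem:one-two} and \ref{lem:three}) ``guarantees that each overlap is eventually carried onto the common $w_1$-tile and hence leads to coincidence'' --- is precisely the assertion to be proved, not a consequence of those lemmas. Those lemmas say only that substituted images of cyclically equivalent words are again cyclically equivalent; preservation of cyclic equivalence by itself never forces a coincidence (Thue--Morse again: the pair $(ab,ba)$ reproduces itself forever, coincidence-free). In effect you have assumed that the balanced pair algorithm for $(uv,vu)$ terminates with coincidence, which is what the lemma exists to establish. The paper obtains the needed coincidence from a completely different, dynamical source: the coincidence rank $cr$ of the factor map $g$. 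It takes $cr$ tilings in one $g$-fiber, pairwise nowhere eventually coincident, uses the Arnoux-Rauzy property that their vertex sets are pairwise disjoint to define the configurations $C_n$, finds $m<n$ with $C_m,C_n$ of the same type but $C_{m+1},C_{n+1}$ of different types (Lemma \ref{lem:configurations}, via aperiodicity), and then argues that if the two new tiles $\tau'$ and $\tau-v$ (of types $j\ne i$, sharing a left endpoint) were nowhere eventually coincident on some subinterval, a limit of $\Phi^{n_k}$-images together with Lemma \ref{same g} would yield $cr+1$ pairwise nowhere eventually coincident tilings in a single $g$-fiber, contradicting the maximality defining $cr$. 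This pigeonhole against the coincidence rank is the essential idea of the paper's proof, and your proposal has no counterpart to it.
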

\begin{proof}
By Lemma \ref{lem:configurations}, there are $m<n$ so that the configurations $C_n$ and $C_m$ have the same type, but $C_{n+1}$ and $C_{m+1}$ have different types. Let $i$ be such that vertex $v_{n+1}$ is the initial vertex of a tile $\tau$ of type $i$ in
$T_{i(n+1)}$ and let $j$ be such that vertex $v_{m+1}$ is the initial vertex of a tile $\tau'$ of type $j$ in $T_{i(m+1)}$. Note that $\{\tau_{n+1(1)}-v,\ldots,\tau_{n+1(cr)}-v\}\setminus(\{\tau-v\})=
\{\tau_{m+1(1)},\ldots,\tau_{m+1(cr)}\}\setminus \{\tau'\}$, with $v:=v_{n+1}-v_{m+1}$. Let $b:=min\{v_{m+2},v_{n+2}-v\}$. Suppose that there are $t_0$ and $\epsilon>0$ so that $(t_0-\epsilon,t_0+\epsilon)\subset (v_{m+1},b)$ and $\tau_{m+1(1)},\ldots,\tau_{m+1(cr)},\tau-v$ are pairwise nowhere eventually coincident on
$(t_0-\epsilon,t_0+\epsilon)$. There is then $n_k\to\infty$ so that $\Phi^{n_k}(T_j-t_0)\to T'_j\in\Omega_{\Phi}$ for $j=1,\ldots,cr$ and $\Phi^{n_k}(T_{i(n+1)}-v-t_0)\to T'_{cr+1}\in\Omega_{\Phi}$.
The tilings $T'_j$, $j=1,\ldots,cr+1$, are then pairwise nowhere  eventually coincident and they all have the same image under $g$, by Lemma \ref{same g}. This violates the definition of $cr$.
As the elements of $\{\tau_{m+1(1)},\ldots,\tau_{m+1(cr)}\}$, and of $(\{\tau_{m+1(1)},\ldots,\tau_{m+1(cr)}\}\setminus\{\tau'\})\cup \{\tau-v\}$ are pairwise nowhere eventually coincident, it must be the case that there is $t\in(t_0-\epsilon,t_0+\epsilon)$ with $\tau' $ and $\tau-v$ eventually coincident at $t$. That is, $\tau'$ and $\tau-v$ are densely eventually coincident on $(v_{m+1},b)$. This means
that there is $\delta>0$ so that $S_i$ and $S_j$ are densely eventually coincident on $(0,\delta)$.
As $\Phi^k(S_i)=S_i$ and $\Phi^k(S_j)=S_j$ for all $k$, we have that $S_i$ and $S_j$ are densely
eventually coincident on $(0,\lambda^k\delta)$ for all $k$, so $S_i$ and $S_j$ are densely eventually coincident.
\end{proof}

\begin{theorem}\label{thm:AR one-to-one} If $\phi$ is an irreducible Pisot Arnoux-Rauzy substitution, then the $\R$-action on $\Omega_{\Phi}$ has pure discrete spectrum.
\end{theorem}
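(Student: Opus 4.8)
The plan is to reduce to the one-dimensional balanced pair criterion of Theorem \ref{uv,vu}, feeding it a balanced pair manufactured from the two fixed tilings produced by Lemma \ref{lem:coincidence} together with the cyclic equivalence of prefixes from Proposition \ref{pro:cyc-equiv}. First I would invoke Lemma \ref{lem:coincidence} to fix a pair of letters $i\ne j$ for which the fixed tilings $S_i,S_j\in\Omega_\Phi$ are densely eventually coincident, and then Proposition \ref{pro:cyc-equiv} to obtain prefixes $P_i=uv$ of $\phi^\infty(i)$ and $P_j=vu$ of $\phi^\infty(j)$ exhibiting a non-parallel cyclic equivalence, so that $[u]$ and $[v]$ are linearly independent in $\R^d$.

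Next I would verify the arithmetic hypothesis of Theorem \ref{uv,vu}, namely that $\langle[u],\omega\rangle$ and $\langle[v],\omega\rangle$ are independent over $\Q$. This is the word-version of the observation used in Corollary \ref{(ab,ba)}: since the characteristic polynomial of $M_\phi$ is irreducible over $\Q$, the rational vectors $z$ with $\langle z,\omega\rangle=0$ form an $M_\phi$-invariant rational subspace (if $\langle z,\omega\rangle=0$ then $\langle M_\phi z,\omega\rangle=\lambda\langle z,\omega\rangle=0$), which by irreducibility must be trivial. Hence any rational relation $p\langle[u],\omega\rangle+q\langle[v],\omega\rangle=0$ forces $p[u]+q[v]=0$, contradicting non-parallelism.

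The heart of the argument, and the step I expect to be the main obstacle, is to show that the balanced pair algorithm for $(uv,vu)$ terminates with coincidence. The key structural point is that, because $uv$ and $vu$ are prefixes of the fixed points $\phi^\infty(i)$ and $\phi^\infty(j)$, the words $\phi^m(uv)$ and $\phi^m(vu)$ are prefixes of the right halves of $S_i$ and $S_j$ occupying the common interval $[0,\lambda^m l]$, where $l:=\langle[uv],\omega\rangle$. Thus the irreducible factors of $(\phi^m(uv),\phi^m(vu))$ are exactly the irreducible balanced blocks in the comparison of $S_i$ and $S_j$ on $[0,\lambda^m l]$, so running the balanced pair algorithm on $(uv,vu)$ amounts to comparing $S_i$ and $S_j$ along the positive axis. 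Finiteness of the set of irreducible factors (condition (1)) I would obtain from the Meyer property enjoyed by Pisot family tilings, recalled at the start of Section \ref{apps}, which bounds the gaps between common vertices of $S_i$ and $S_j$ and hence the lengths of irreducible blocks. For the coincidence condition (2), each irreducible factor occupies an interval with nonempty interior; since $S_i$ and $S_j$ are densely eventually coincident, that interior contains a point $x_0$ at which $S_i$ and $S_j$ are eventually coincident, i.e. $S_i$ and $S_j$ share a tile at $\lambda^k x_0$ for some $k$, and this shared tile is precisely a coincidence factor of the $\phi^k$-image of that balanced pair. Hence every irreducible factor leads to coincidence.

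With both conditions verified, Theorem \ref{uv,vu} applies and yields pure discrete spectrum for the $\R$-action on $\Omega_\Phi$. The delicate points in a full write-up are the bookkeeping identifying positions in $\phi^m(uv)$ versus $\phi^m(vu)$ with positions in $S_i$ versus $S_j$ (so that ``shared tile'' and ``coincidence factor'' genuinely coincide), and the boundedness of irreducible-block length, for which the Meyer property is essential.
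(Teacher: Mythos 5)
Your first two steps are exactly the paper's: Lemma \ref{lem:coincidence} and Proposition \ref{pro:cyc-equiv} supply the densely eventually coincident fixed tilings $S_i,S_j$ and the non-parallel cyclic equivalence $P_i=uv$, $P_j=vu$, and your rational-independence argument (an integer vector orthogonal to $\omega$ must vanish, by irreducibility) is the paper's as well. The divergence is the final step, and it contains a genuine gap. You route the conclusion through the symbolic criterion of Theorem \ref{uv,vu}, which forces you to verify condition (1) of ``terminates with coincidence'': finiteness of the set of irreducible factors of $(\phi^m(uv),\phi^m(vu))$ over all $m$. Your justification --- that the Meyer property ``bounds the gaps between common vertices of $S_i$ and $S_j$'' --- is not what the Meyer property gives. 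As recalled at the start of Section \ref{apps}, Meyer yields finiteness of the \emph{translation equivalence classes of overlaps}; it says nothing about common vertices being relatively dense (two tilings of a Meyer tiling space can have arbitrarily sparse, even empty, sets of common vertices). Since in the irreducible case a coincidence point is the same thing as a common vertex, bounded length of irreducible factors is precisely relative denseness of coincidences, uniformly in $m$; and dense eventual coincidence of $(S_i,S_j)$ does not supply this either, because it produces, for each $x$, a shared tile at some level $k(x)$ near $\lambda^{k(x)}x$ with no uniformity in $k$, so at a fixed level $m$ the shared tiles may a priori have gaps of order $\lambda^m$. Relative denseness of coincidences is essentially the overlap-coincidence criterion for pure discrete spectrum itself --- it is the substance of what is being proved, not a technicality. (A non-circular repair exists inside your framework --- every overlap leads to coincidence by dense eventual coincidence, finiteness of overlap types would give a uniform bound on the time to coincidence, hence bounded gaps --- but invoking the paper's Meyer statement for the pair $(\bar{Q}_i,\bar{Q}_i-l_u)$ also requires checking that $l_u$ lies in $GR(\Phi)\cup\bigcup_{k\in\Z}\Lambda^k\mathcal{R}$, which you do not address.)

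The paper avoids all of this by not using Theorem \ref{uv,vu} at all: it applies the geometric Theorem \ref{main theorem} directly, which has no finiteness hypothesis. You already hold every ingredient it needs. Eventual coincidence at a point is a local, translation-equivariant condition; $\bar{Q}_i$ agrees with $S_i$ and $\bar{Q}_j=\bar{Q}_i-l_u$ agrees with $S_j$ on $[0,l]$, so Lemma \ref{lem:coincidence} gives a dense set of eventual-coincidence points in $(0,l)$, and $l$-periodicity of the pair spreads this over all of $\R$. Your independence of $\langle[u],\omega\rangle$ and $\langle[v],\omega\rangle$ over $\Q$ is equivalent to $l_u$ being completely rationally independent of $\{l\}$, $l=l_u+l_v$. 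Theorem \ref{main theorem} then concludes. In other words, your condition-(2) analysis (which is essentially sound, modulo the irreducibility argument needed to turn a shared tile into a genuine coincidence factor) need never be packaged as a balanced-pair statement: the geometric theorem consumes dense eventual coincidence directly, and the problematic condition (1) evaporates.
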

\begin{proof}
Let $S_i$ and $S_j$, $i\ne j$, be the fixed tilings in $\Omega_{\Phi}$ that are densely eventually coincident (Lemma \ref{lem:coincidence}), and let $Q_i$ and $Q_j$ be their initial
patches, corresponding to the prefixes $P_i$ and $P_j$ that are non-parallel cyclically equivalent (Proposition \ref{pro:cyc-equiv}). Let $u$ and $v$ be non-degenerate words with non-parallel abelianizations  so that $P_i=uv$ and $P_j=vu$, let $l_u$ be the length of the patch corresponding to $u$, and let $l$ be the length of $Q_i$. Suppose that $rl_u-sl=0$ for integers $r,s$. The length of 
a tile of type $i$ is given by $\langle [i],\omega\rangle$ where $\omega=(\omega_1,\ldots,\omega_d)$ is a positive left Perron eigenvector of $M_{\phi}$.
We have $0=rl_u-sl=r\langle[u],\omega\rangle-s\langle[u+v],\omega\rangle=\langle(r-s)[u]-s[v],\omega\rangle$. But the $\omega_i$ are independent over $\mathbb{Q}$ (by irreducibility), so the only integer vector orthogonal to $\omega$ is $\bf{0}$. Thus $(r-s)[u]=s[v]$, and since $[u]$ and $[v]$ are not parallel,
$r=0=s$. Thus, $l_u$ is irrationally related to $l$. We have that $\bar{Q}_i$ and $\bar{Q}_j=\bar{Q}_i-l$ are densely eventually coincident. By Theorem \ref{main theorem}, the $\R$-action on $\Omega_{\Phi}$ has pure discrete spectrum.
\end{proof}

\begin{ex} ({\bf Arbitrary compositions of the Rauzy and modified Rauzy substitutions})

Let us consider the Rauzy substitution $\tau_1$: $1 \to 12$, $2 \to 13$ and $3 \to 1$,
and modified Rauzy substitutions defined as follows, $\tau_2$: $1 \to 12$, $2 \to 31$ and $3 \to 1$;
$\tau_3$: $1 \to 21$, $2 \to 13$ and $3 \to 1$; and $\tau_4$: $1 \to 21$, $2 \to 31$ and $3 \to 1$.
It is easy to check that applying $\tau_1$, $\tau_2$, $\tau_3$ or $\tau_4$ to any pair in the list
below results in a pair that can be factored as a product of pairs in the list below (or of the duals $(v,u)$ of pairs $(u,v)$ in the list) and coincidences.

List of irreducible pairs:
(12, 21), (13, 31), (123, 231), (321, 132), (213, 312), (1123, 3112), (3211, 2113), (1213, 3112), (3121, 2113),
(1213, 3121), (1231, 3112), (1321, 2113), (2131, 3112), (1312, 2113), (11231, 31112), (12123, 23112), (32121, 21132),
(12311, 31112), (11321, 21113), (121213, 311212), (312121, 212113), (121123, 311212), (321121, 212113),
(121213, 312112), (312121, 211213), (121231, 231112), (132121, 211132), (1121231, 3112112), (1321211, 2112113), (12121231, 23111212), (13212121, 21211132)

Note that for any $i,j\in\{1,2,3,4\}$ and any $a,b\in\{1,2,3\}$, $(\tau_i\circ\tau_j(a),\tau_i\circ\tau_j(b))$ is either of the form $(uk\ldots,vk\ldots)$ or $(\ldots ku,\ldots kv)$ for some $k\in\{1,2,3\}$ and (possibly empty) words $u,v$ with $[u]=[v]$.
Thus if $\tau$ is any finite composition of the $\tau_i$'s, the balanced pair algorithm for $\tau$ applied to $(12,21)$ terminates with coincidence. Since such $\tau$ are Pisot with irreducible incidence matrix, the following is a consequence of Corollary \ref{(ab,ba)}.

\begin{theorem} If $\tau$ is any finite composition of Rauzy and modified Rauzy substitutions, the $\R$-action on $\Omega_{\tau}$ has pure discrete spectrum.
\end{theorem}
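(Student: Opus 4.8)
The plan is to read this off from Corollary \ref{(ab,ba)} applied to the letters $a=1$ and $b=2$, so that the seed balanced pair for the algorithm is $(12,21)$. To invoke that corollary for an arbitrary finite composition $\tau=\tau_{i_1}\circ\cdots\circ\tau_{i_r}$ I must check two things: that $\tau$ is a Pisot substitution whose incidence matrix has characteristic polynomial irreducible over $\Q$, and that the balanced pair algorithm for $(12,21)$ terminates with coincidence.

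For the first point I would note that abelianization ignores letter order, and that all four substitutions send $1$ to a word with letters $\{1,2\}$, $2$ to a word with letters $\{1,3\}$, and $3$ to $1$; hence $\tau_1,\dots,\tau_4$ have one common incidence matrix $M$, the Tribonacci matrix, and $M_\tau=M^r$. The Perron--Frobenius eigenvalue of $M$ is the Tribonacci number $\lambda$, a Pisot unit of degree $3$, so $\lambda^r$ is the Perron--Frobenius eigenvalue of $M_\tau$ and is again Pisot. Irreducibility of the characteristic polynomial of $M^r$ then follows because $\Q(\lambda)$ is cubic: $\Q(\lambda^r)$ is either $\Q$ or all of $\Q(\lambda)$, and it cannot be $\Q$ since $|\lambda^r|>1$ while the Galois conjugates $(\lambda')^r,(\lambda'')^r$ have modulus $<1$, forcing $\lambda^r$ irrational of degree $3$; thus the degree-$3$ characteristic polynomial of $M^r$ is the minimal polynomial of $\lambda^r$ and is irreducible.

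The substance is the balanced pair algorithm, and here I would lean on the two combinatorial facts gathered in the example. Condition (1), finiteness of the set of irreducible factors, comes from the closure of the displayed list $L$ (together with its duals) under each $\tau_i$ up to coincidences. Because each $\tau_i$ is a substitution it respects concatenation and sends a balanced prefix to a balanced prefix (abelianizations being multiplied by $M$), so it carries the factorization of a balanced pair into irreducibles to a factorization of the same kind; composing these factorizations shows that $\tau$, and hence every iterate $\tau^m$ applied to $(12,21)$, produces only irreducible factors drawn from the finite set $L\cup L^{\mathrm{dual}}\cup\{(a,a):a\in\{1,2,3\}\}$. For condition (2), eventual appearance of a coincidence, I would use the matched-letter property of double substitutions: for any $\tau_i\circ\tau_j$ and any two distinct letters the images have the form $(uk\dots,vk\dots)$ or $(\dots ku,\dots kv)$ with $[u]=[v]$. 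Applied to the seed pair this already suffices: writing $A=\tau_i\tau_j(1)$ and $B=\tau_i\tau_j(2)$, in the first form $AB$ and $BA$ begin $uk\dots$ and $vk\dots$ and factor as $(u,v)(k,k)\cdots$, while in the second form the balanced suffixes split off from the back and expose a trailing $(k,k)$; either way a coincidence is produced. The same mechanism, applied to the distinct leading and trailing letters of each surviving irreducible factor, forces a coincidence to split off (from the front, the back, or an internal balanced cut) after one further double substitution, which is realized by $\tau$ when $r\ge 2$ and by $\tau^2$ when $r=1$.

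I expect the genuine work to be bookkeeping rather than conceptual: verifying by hand that $L$ is closed up to duals and coincidences under all four substitutions (the step the example calls ``easy to check''), and confirming that for each of the finitely many members of $L$ the matched letter supplied by the note really lands at a balanced position in the factorization of its image, so that a coincidence does appear. The one point requiring care is this second item, since the note is stated for images of single letters and the coincidence for a general list pair may surface at an interior cut rather than at the two ends; once it is checked, Corollary \ref{(ab,ba)} delivers pure discrete spectrum of the $\R$-action on $\Omega_\tau$.
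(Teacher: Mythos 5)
Your proposal follows the paper's own route exactly: Corollary \ref{(ab,ba)} applied to the seed pair $(12,21)$, with finiteness of the irreducible factors coming from closure of the displayed list (and its duals) under each $\tau_i$, and eventual coincidence coming from the matched-letter property of the double substitutions $\tau_i\circ\tau_j$ --- the same two facts the paper itself declares ``easy to check.'' Your extra field-theoretic verification that $M_\tau=M^r$ (the Tribonacci matrix power) has irreducible characteristic polynomial fills in a hypothesis the paper only asserts, and the ``point requiring care'' you flag (a coincidence may surface at an interior balanced cut rather than at either end, e.g.\ for $(1123,3112)$ under $\tau_2\circ\tau_3$) is exactly the finite hand-verification the paper's argument also leaves implicit, so your proposal is at the same level of rigor as the paper's proof.
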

\end{ex}

\end{document}